\numberwithin{equation}{section}
\newcommand{\numberby}{equation}
\numberwithin{equation}{section}
\newtheorem{lemma}[\numberby]{Lemma}
\newtheorem{proposition}[\numberby]{Proposition}
\newtheorem{theorem}[\numberby]{Theorem}
\newtheorem{corollary}[\numberby]{Corollary}
\newcommand{\FLEX}{\relax}
\newcommand{\flex}[1]{\renewcommand{\FLEX}{#1}}
\newtheorem{flexthm}[\numberby]{\FLEX}
\newenvironment{flexstate}[2]{\flex{#1}\begin{flexthm}[#2]}{\end{flexthm}}
\long\def\quiet #1 \endquiet{\relax}
\long\def\Quiet #1 \endQuiet{\relax}
\theoremstyle{definition}
\newtheorem{definition}[\numberby]{Definition}
\newtheorem{example}[\numberby]{Example}
\newenvironment{remark}[1]{\refstepcounter{\numberby}%
\vskip 5pt \par\noindent {\bf #1\ \thelemma .}}{\vskip 5pt \par}
\newenvironment{remark*}[1]{\par \vskip 5pt \noindent 
{\bf #1.}}{\vskip 5pt \par}
\newlength{\dqlength}
\newcommand{\bh}{\ensuremath{{\mathcal B}({\mathcal H})}}
\newcommand{\cstaralg}{$C^*$-algebra}
\providecommand{\dual}[1]{\ensuremath{#1^{\#}}}
\newcommand{\dperp}{{\perp\perp}}
\newcommand{\dstext}[1]{\quad\text{#1}\quad}
\newcommand{\id}{{\operatorname{id}}}
\newcommand{\idealin}{\unlhd}
\providecommand{\qed}%
{\hfill \vrule height5pt width4pt depth1pt \vspace{+2.00ex}}
\newcommand{\supp}{\operatorname{supp}}
\newcommand{\tnorm}[1]{{\left\vert\kern-0.25ex\left\vert\kern-0.25ex\left\vert #1 
   \right\vert\kern-0.25ex\right\vert\kern-0.25ex\right\vert}}
  \newcommand{\A}{{\mathcal{A}}}
  \newcommand{\B}{{\mathcal{B}}}
  \newcommand{\C}{{\mathcal{C}}}
  \newcommand{\D}{{\mathcal{D}}}
  \newcommand{\G}{{\mathcal{G}}}
\let\aS=\S  % This saves the section symbol as \aS.
\renewcommand{\S}{{\mathcal{S}}}
  \newcommand{\V}{{\mathcal{V}}}
\newcommand{\ideal}{\operatorname{ideal}}%{C_0}
\providecommand{\dperp}{{\perp\perp}}
\newcommand{\intr}{\operatorname{int}}
\newcommand{\cl}{\operatorname{cl}}
\newcommand{\ropen}{\operatorname{\textsc{Ropen}}}
\newcommand{\rideal}{\operatorname{\textsc{Rideal}}}
\newcommand{\isol}{\text{\sc isol}}
\begin{document}

\title{Irreducible Maps and Isomorphisms of Boolean Algebras of
               Regular Open Sets and Regular Ideals}

\author[D.R. Pitts]{David
R. Pitts} \address{Dept. of Mathematics\\ University of
Nebraska-Lincoln\\ Lincoln, NE\\ 68588-0130}
\email{dpitts2@unl.edu}
%\date{\today}
\keywords{Irreducible mappings, regular open sets, regular ideals,
  Boolean algebras} \subjclass[2020]{Primary 54H99, 46J10
  Secondary 06E15, 54G05}

\begin{abstract}
 Let $\pi: Y\rightarrow X$ be a continuous surjection between compact
 Hausdorff spaces $Y$ and $X$ which is irreducible in the sense that if
 $F\subsetneq Y$ is closed, then $\pi(F)\neq X$.   We exhibit  isomorphisms between
  various  Boolean algebras associated to this data:  the regular open
  sets of $X$, the regular 
  open sets of  $Y$, the regular ideals of $C(X)$ and the regular
  ideals of $C(Y)$.

  We call $X$ and $Y$ Boolean equivalent if the
  regular open sets of $X$ and the regular open sets of $Y$ are isomorphic
  Boolean algebras.  We give a characterization of when two compact
  metrizable spaces are Boolean equivalent; this
  characterization may be viewed as a topological version of the characterization of
  standard Borel spaces.
  \end{abstract}

\maketitle

\section{Introduction}
A continuous surjection $\pi: Y\rightarrow X$ between compact
Hausdorff spaces is called \textit{irreducible} if the only closed
subset of $Y$ which surjects onto $X$ is $Y$ itself.
The map $\pi$ dualizes to a $*$-monomorphism $\alpha: C(X)\rightarrow
C(Y)$, given by $f\mapsto f\circ\pi$.  As $\pi$ is irreducible,
$\alpha$ has the property that for every non-zero ideal $K\idealin
C(Y)$, $\alpha^{-1}(K)\idealin C(X)$ is non-zero (see Lemma~\ref{ecoveex}
 for a proof).
Associated to this data are: 
\begin{enumerate}
\item \label{BAXY} the Boolean algebras,  $\ropen(X)$ and $\ropen(Y)$, of regular open
  sets of $X$ and $Y$; and 
  \item \label{dBAXY} the Boolean algebras, $\rideal(C(X))$ and $\rideal(C(Y))$, of
    regular ideals of $C(X)$ and $C(Y)$.
  \end{enumerate}
  It is well-known that closed ideals in $C(X)$ are in bijective
  correspondence with open subsets of $X$. Restricting this bijection to 
  $\rideal(X)$ gives a Boolean algebra isomorphism between
  $\rideal(C(X))$ and $\ropen(X)$; this bit of folklore is recorded in
  Lemma~\ref{ridisorop} below.

  A far less familiar fact is that all four of the Boolean algebras
  listed above are isomorphic.  In~\cite[Lemma~2.13]{PittsStReInII},
  we stated  without proof that the pairs of Boolean algebras given in
 \eqref{BAXY} and \eqref{dBAXY} are isomorphic.  However,
  \cite[Lemma~2.13]{PittsStReInII} incorrectly describes the
  isomorphism of $\ropen(X)$ onto $\ropen(Y)$.  Aside from
  ~\cite{PittsStReInII}, we are not aware of other publications where the
  isomorphism of these Boolean algebras is asserted.
   
  The main purpose of these notes is to provide
  complete proofs that the four Boolean algebras listed above are
  isomorphic and to explicitly describe isomorphisms (in terms of
  $\pi$ and $\alpha$) between them.  Propositions~\ref{YXiso}
  and~\ref{ridealISO} below give  isomorphisms between the
  Boolean algebras listed above in~\eqref{BAXY} and ~\eqref{dBAXY}
  respectively.  As Lemma~\ref{ridisorop} gives an 
  isomorphism (and its inverse) between $\ropen(X)$ and
  $\rideal(C(X))$, all four of these Boolean algebras are isomorphic.
  While Propositions~\ref{YXiso}
  and~\ref{ridealISO} give more detailed information, we summarize
  them here. 
    \begin{theorem}\label{sumres}   Let $\pi: Y\rightarrow X$ be an
      irreducible map, and let $\alpha:
      C(X)\rightarrow C(Y)$ be the $*$-monomorphism dual to $\pi$. The following statements hold.
      \begin{enumerate}
\item \label{sumres1} The map $\ropen(Y) \ni U
    \mapsto \intr(\pi(\cl(U)))\in \ropen(X)$ is 
  is an isomorphism 
  of Boolean algebras whose inverse is 
  $\ropen(X)\ni V\mapsto \intr(\cl(\pi^{-1}(V)))\in\ropen(Y)$.
\item \label{sumres2}
  The map
  $\rideal(C(X))\ni J\mapsto \alpha(J)^\dperp\in \rideal(C(Y))$
is a Boolean algebra isomorphism whose inverse
is 
 $\rideal(C(Y))\ni
  K\mapsto \alpha^{-1}(K)\in \rideal(C(X))$.
      \end{enumerate}
    \end{theorem}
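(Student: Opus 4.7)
The plan is to prove part~\eqref{sumres1} by direct topological arguments exploiting the irreducibility of $\pi$, and then to deduce part~\eqref{sumres2} by transporting the resulting isomorphism across the Boolean algebra isomorphism $\ropen(X)\cong\rideal(C(X))$ supplied by Lemma~\ref{ridisorop}. Write $\phi(U):=\intr(\pi(\cl(U)))$ and $\psi(V):=\intr(\cl(\pi^{-1}(V)))$ for the two candidate maps in~\eqref{sumres1}. Since the interior of any closed set is regular open, both $\phi$ and $\psi$ land in the claimed codomains, and both are manifestly order-preserving.

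To verify $\phi\circ\psi=\id_{\ropen(X)}$, combine $\cl(\pi^{-1}(V))\subseteq\pi^{-1}(\cl(V))$ with surjectivity of $\pi$ to obtain $\pi(\cl(\pi^{-1}(V)))=\cl(V)$, and apply the identity $\cl(\intr(\cl(S)))=\cl(S)$ (valid for open $S$) to $S=\pi^{-1}(V)$; together these reduce $\phi(\psi(V))$ to $\intr(\cl(V))=V$, and use only continuity and surjectivity of $\pi$. For the reverse inclusion $\id\subseteq\psi\circ\phi$, fix $U\in\ropen(Y)$, $u\in U$, and an open neighborhood $N\subseteq U$ of $u$; irreducibility guarantees that $X\setminus\pi(Y\setminus N)$ is a non-empty open subset of $\pi(N)\subseteq\pi(\cl(U))$, hence of $\phi(U)$, with preimage in $N$. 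Thus $N$ meets $\pi^{-1}(\phi(U))$, so $u\in\cl(\pi^{-1}(\phi(U)))$, yielding $U\subseteq\psi(\phi(U))$.

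The main obstacle is upgrading this inclusion to equality, equivalently showing $\phi$ is injective; once that is in hand, applying $\phi$ to $U\subseteq\psi(\phi(U))$ and using $\phi\circ\psi=\id$ forces $\phi(U)=\phi(\psi(\phi(U)))$, whence $U=\psi(\phi(U))$. For injectivity, suppose $\phi(U_1)=\phi(U_2)$ with $U_1\neq U_2$; distinct regular open sets have distinct closures, so after relabeling one finds $y\in\cl(U_1)\setminus\cl(U_2)$, from which a non-empty open $W_1\subseteq U_1\setminus\cl(U_2)$ can be extracted. Irreducibility then yields the non-empty open set $X\setminus\pi(Y\setminus W_1)$, which is an open subset of $\pi(W_1)\subseteq\pi(\cl(U_1))$ hence lies in $\phi(U_1)$, yet whose preimage lies in $W_1\subseteq Y\setminus\cl(U_2)$ and which is therefore disjoint from $\pi(\cl(U_2))\supseteq\phi(U_2)$---a contradiction. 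An order-preserving bijection between Boolean algebras is automatically a Boolean isomorphism, completing~\eqref{sumres1}.

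For part~\eqref{sumres2}, I would transport both candidate maps through the correspondence of Lemma~\ref{ridisorop}, under which a regular ideal $J\subseteq C(X)$ corresponds to the regular open set $V_J\in\ropen(X)$ with $J=\{f\in C(X):f|_{X\setminus V_J}=0\}$. A direct calculation shows that the closed ideal of $C(Y)$ generated by $\alpha(J)$ is $\{g\in C(Y):g|_{Y\setminus\pi^{-1}(V_J)}=0\}$, whose double annihilator corresponds to $\intr(\cl(\pi^{-1}(V_J)))=\psi(V_J)$. Dually, for $K\in\rideal(C(Y))$ with corresponding $U\in\ropen(Y)$, one checks $\alpha^{-1}(K)=\{f\in C(X):f|_{\pi(Y\setminus U)}=0\}$, which corresponds to the open set $X\setminus\pi(Y\setminus U)$; matching this with $\phi(U)$---which simultaneously confirms that $\alpha^{-1}(K)$ is regular---reduces to the identity $\pi(Y\setminus U)=\cl(X\setminus\pi(\cl(U)))$ for $U\in\ropen(Y)$. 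The easy inclusion uses that $X\setminus\pi(\cl(U))\subseteq\pi(Y\setminus\cl(U))\subseteq\pi(Y\setminus U)$; the reverse inclusion is proved by a variant of the injectivity argument, namely that any $x\in\pi(Y\setminus U)$ possessing an open neighborhood in $\pi(\cl(U))$ would, via $Y\setminus U=\cl(Y\setminus\cl(U))$ and irreducibility applied to an open set meeting $Y\setminus\cl(U)$, produce a non-empty open subset of $X$ lying both inside and outside $\intr(\pi(\cl(U)))$. Hence the two maps in~\eqref{sumres2} correspond exactly to $\psi$ and $\phi$ of~\eqref{sumres1}, and~\eqref{sumres2} follows.
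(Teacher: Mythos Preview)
Your proof is correct, and for part~\eqref{sumres1} it takes a genuinely different route from the paper. The paper establishes the isomorphism by first treating the special case where the domain is projective (Proposition~\ref{fineform}), using Stone duality for $\ropen(X)$ to construct the projective cover explicitly and to identify the maps~\eqref{fineform1} and~\eqref{fineform2}. For a general essential cover $(Y,\pi)$, the paper then factors through a common projective cover $(P,f)$ of $X$ (which is simultaneously a projective cover $(P,g)$ of $Y$), defines $\Psi$ and $\Phi$ as composites $\Psi_f\circ\Phi_g$ and $\Psi_g\circ\Phi_f$, and finally verifies the formulae~\eqref{YXiso1} and~\eqref{YXiso2} by chasing through these composites (Proposition~\ref{YXiso}). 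Your argument, by contrast, is entirely self-contained: the identity $\phi\circ\psi=\id$ follows from continuity and surjectivity alone, while the inclusion $U\subseteq\psi(\phi(U))$ and the injectivity of $\phi$ both rest on the single observation that irreducibility makes $X\setminus\pi(Y\setminus W)$ a non-empty open subset of $\pi(W)$ whenever $W\subseteq Y$ is non-empty open. This is more elementary and avoids projective covers, Stone duality, and the Gleason construction altogether. The paper's approach, on the other hand, makes the link to the Stone dual explicit and yields Corollaries~\ref{reclBA} and~\ref{Cor:D} as byproducts, which are used in Section~\ref{SecBE}.

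For part~\eqref{sumres2} the two approaches are close in spirit: both transport through Lemma~\ref{ridisorop}. The paper packages the translation as Lemma~\ref{ropen->rideal}, computing $\supp(\alpha(J)^\dperp)=\Phi(\supp(J))$ and $\supp(\alpha^{-1}(K))=\Psi(\supp(K))$ directly. Your computation of $\alpha(J)^\dperp$ via the closed ideal generated by $\alpha(J)$ is essentially the paper's~\eqref{ro->ri3}--\eqref{ro->ri4}. Your treatment of $\alpha^{-1}(K)$ differs slightly: rather than the paper's two-inclusion argument for $\supp(\alpha^{-1}(K))=\Psi(\supp(K))$, you identify $\alpha^{-1}(K)$ with $\ideal(X\setminus\pi(Y\setminus U))$ and then prove the set-theoretic identity $X\setminus\pi(Y\setminus U)=\phi(U)$ for $U\in\ropen(Y)$ by another application of your irreducibility trick. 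This is a clean alternative and has the pleasant side effect of showing directly that $\alpha^{-1}(K)$ is regular.
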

  
Our route to Theorem~\ref{sumres} starts in Section~\ref{sec:pc}, where
we establish part~\eqref{sumres1} in the
special case when the domain of $\pi$  is a projective space;
Section~\ref{sec:ECI} does the  general case.  In
Section~\ref{sec:BARI}, we explain the notation in
Theorem~\ref{sumres}\eqref{sumres2} and give its proof. 
  
Section~\ref{SecBE} gives an application of Theorem~\ref{sumres}: for
compact and metrizable spaces $X$ and $Y$, Theorem~\ref{compactmetric}
characterizes when $\ropen(X)$
and $\ropen(Y)$ are isomorphic Boolean algebras.  In particular, when $X$ and $Y$ are perfect, compact and
metrizable, their Boolean algebras of regular open sets are
isomorphic.

  Our interest in Theorem~\ref{sumres} originated with our studies
  in~\cite{PittsStReInI,PittsStReInII} of regular inclusions of
  \cstaralg s.  When $\D$ is a unital  \cstaralg\ embedded
  as a  subalgebra of the unital \cstaralg\ $\C$, there is
  always a unital completely positive mapping $E$ of $\C$ into the injective
  envelope $I(\D)$ of $\D$ which extends the inclusion of $\D$ into
  $I(\D)$.  We called such a map a pseudo-expectation for the
  inclusion.  In some cases, a pseudo-expectation is unique or both 
  faithful and unique.  Such properties of pseudo-expectations imply
  interesting structural properties of the inclusion $\D\subseteq\C$,
  some of which may be found in~\cite{PittsStReInI,PittsZarikianUnPsExC*In}.
  In the  abelian case,~\cite[Corollary~3.22]{PittsZarikianUnPsExC*In}
  characterizes the inclusions $C(X)\subseteq C(Y)$ having a unique
  and faithful pseudo-expectation as those for which the associated
  surjection of $Y$ onto $X$ is irreducible.  This fact
  played an instrumental role in our characterization in~\cite{PittsStReInII} of regular
  inclusions having a Cartan envelope.

  We thank Jon Brown, Ruy Exel, Adam Fuller, and Sarah Reznikoff for
  several helpful conversations.

\section{Projective Covers of Compact Hausdorff Spaces}\label{sec:pc}

Throughout, all topological spaces are assumed Hausdorff
and compact, and all maps between spaces are assumed continuous.  When
$X$ is a space and $E\subseteq X$ is any subset, we will use $\cl E$
and $\overline E$ interchangeably for the closure of $E$; likewise we
will use $\intr E$ and $E^\circ$ interchangeably for the interior of
$E$.

The main goal of this section is to establish
Proposition~\ref{fineform}, which describes an isomorphism between the
Boolean algebras of regular open sets for spaces $X$ and $P$ when $f:
P\rightarrow X$ is an irreducible map and $P$ is a projective space. 

We begin by recalling some facts about projective
topological spaces and projective covers of compact Hausdorff spaces. 
Gleason~\cite{GleasonPrToSp} calls the space $P$ \textit{projective}
if given spaces $Y$ and $X$, a surjective map
$\pi: Y\twoheadrightarrow X$ and a map $f: P\rightarrow X$, there
exists a map $g: P\rightarrow X$ such that $f=\pi\circ g$.
\begin{equation}\label{CD}
  \xymatrix{Y\ar@{>>}[d]_\pi & \\ X & P\, .\ar@{>}[l]^f\ar@{-->}[ul]_g}
\end{equation}
The space $P$ is projective if and only it is extremally
disconnected (that is, the closure of every open set is open)~\cite[Theorem~2.5]{GleasonPrToSp}.   (A
projective, compact Hausdorff space is sometimes called a
\textit{Stonean space}.)

Following~\cite{HadwinPaulsenInPrAnTo},
a \textit{cover} for the  compact Hausdorff
space $X$ is a pair $(Y,\pi)$ consisting of a compact Hausdorff space $Y$
and a continuous surjection $\pi:Y\rightarrow X$.
If $\pi$ is irreducible, the cover $(Y,\pi)$ is called an
\textit{essential cover}. If the only continuous
map $h:Y\rightarrow Y$ which satisfies $\pi\circ h=\pi$ is
$h=\text{id}_Y$. the cover $(Y,\pi)$ of $X$  is called \textit{rigid}.
\begin{remark}{Remark} The definition of essential cover given here differs
from, but is easily seen to be
equivalent to, the definition given
in~\cite{HadwinPaulsenInPrAnTo}.
\end{remark}

By~\cite[Proposition~2.13]{HadwinPaulsenInPrAnTo}, if $(P,f)$ is a
cover for $X$ with $P$ a projective space, then $(P,f)$ is rigid if
and only if $(P,f)$ is essential.
\begin{definition}[{\cite{HadwinPaulsenInPrAnTo}}]
If $(P,f)$ is a rigid cover for $X$ and $P$ is projective, $(P,f)$  is called a
\textit{projective cover} for $X$.   Projective covers are also called
\textit{Gleason
  covers}, see~\cite{BezhanishviliGabelaiaHardingJibladzeCoHaSpReGlSp}.
\end{definition}

For an open subset $V$ of $X$, let
\[V^\perp:=X\setminus \overline V=(X\setminus V)^\circ \dstext{and write}
V^\dperp:=(V^\perp)^\perp.    \] 
Recall that an open set $V\subseteq X$ is called a \textit{regular
  open set} if $V=V^\dperp$, that is, if $V=\intr(\cl V)$.
\begin{remark}{Notation}
We shall use $\ropen(X)$ to denote the family of all
regular open sets in $X$.
\end{remark}
Here are some facts regarding $\ropen(X)$.
\begin{flexstate}{Facts}{}\label{ropfacts}  Let $X$ be a compact
               Hausdorff space. 
  \begin{enumerate}
    \item Suppose $U$ is an open subset of $X$ and $x_0\in U$.  Then
               there is $V\in \ropen(X)$ with \[x_0\in V\subseteq
               \overline V\subseteq U.\]  In particular, $\ropen(X)$
               is a base for the topology of $X$. 

\item With the
operations ~%\cite{HalmosLeBoAl}*{\aS 7, Lemma~1}
\begin{equation}\label{ropenops}  V_1\vee V_2:= \text{int}(\text{cl}(V_1\cup V_2)),\quad V_1\wedge
  V_2:= V_1\cap V_2, \dstext{and} \neg V:= V^\perp,
 \end{equation}
$\ropen(X)$ is a complete
Boolean algebra.
\end{enumerate}
\end{flexstate}
\begin{proof}
(a) 
As $X$ is a regular topological space,  there exist disjoint open subsets $V_1$ and $V_2$ with
$x_0\in V_1$ and $X\setminus U\subseteq V_2$. Then
$x_0\in V_1\subseteq \overline{V_1}\subseteq U$.  Take $V:=\intr(\cl
V_1))\in \ropen(X)$.

(b) See \cite[\aS 7, Lemma~1]{HalmosLeBoAl}.
\end{proof}

\begin{remark}{Standing Assumption and Notation} \label{stas1} With the exception of the
  material following Proposition~\ref{PhiDes},
  for the remainder of the section, we shall
fix the space $X$ and let $P$ be the dual of $\ropen(X)$, that is, $P$ is the set of all
Boolean algebra homomorphisms of $\ropen(X)$ into the Boolean algebra
$\mathbf{2}:= \{0,1\}$.

As $P\subseteq \mathbf 2^{\ropen(X)}$ is
closed (\cite[\aS 18, Lemma~2]{HalmosLeBoAl}),  $P$ is compact.
Furthermore~\cite[\aS 21, Theorem~10]{HalmosLeBoAl}, $P$ is extremally
disconnected (i.e.\ Stonean) and hence $P$ is projective.
\end{remark}

Note that $\ropen(P)$ is precisely
the collection of clopen subsets of $P$.  Furthermore, for $V\in
\ropen(X)$, define
\begin{equation}\label{phidef}
  \Phi(V):=\{p\in P: p(V)=1\}.
\end{equation}
Then $\Phi(V)$ is a
clopen subset of $P$, so $\Phi$ gives a mapping from $\ropen(X)$ into
$\ropen(P)$.  The following  is an application of 
 Stone's duality theorem.
\begin{flexstate}{Fact}{c.f.\ {\cite[\aS 18, Theorem 6]{HalmosLeBoAl}}}\label{StoneTh}  The map $\Phi:
  \ropen(X)\rightarrow\ropen(P)$ is an isomorphism of Boolean algebras.
\end{flexstate}

Gleason showed that any compact Hausdorff space $X$ has a (nearly
unique) projective cover; see Fact~\ref{pcu} below.
In~\cite{RainwaterNoPrRe}, Rainwater
gave a rather different
and simpler construction of a projective cover for $X$ (see also~\cite{HadwinPaulsenInPrAnTo}).  We now describe a slight
modification of Gleason's construction of a projective cover for $X$.
We refer the reader to \cite[Section~3]{GleasonPrToSp}, or
~\cite{StraussExDiSp} for the proof of the following result.

\begin{theorem}[Gleason]\label{pcover} Given $p\in P$, the collection, \[\V_p:=\{\overline V: V\in
\ropen(X), p(V)=1\}\] has the finite intersection property and
$\bigcap \V_p$ is a singleton set.   Let $f(p)$ be the
element of $\bigcap \V_p$.  
The function $f: P\rightarrow X$ is surjective, continuous and the pair $(P,f)$ is a
projective cover for $X$.
\end{theorem}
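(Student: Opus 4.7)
The plan is to verify each assertion in turn, exploiting the Boolean-algebraic structure of $\ropen(X)$ and the realization of $P$ as its Stone dual. For the finite intersection property: since $p$ is a Boolean homomorphism into $\mathbf 2$, any finite collection $V_1,\dots,V_n$ with each $p(V_i)=1$ satisfies $p(V_1\wedge\cdots\wedge V_n)=1$; meets in $\ropen(X)$ are ordinary intersections, so $V_1\cap\cdots\cap V_n$ is nonempty, and a fortiori so is $\overline{V_1}\cap\cdots\cap\overline{V_n}$. Compactness of $X$ then yields $\bigcap\V_p\neq\emptyset$. To show this intersection is a singleton, I would suppose $x\neq y$ both lie in it, and use Fact~\ref{ropfacts}(a) to produce $V\in\ropen(X)$ with $x\in V\subseteq\overline V\subseteq X\setminus\{y\}$. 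If $p(V)=1$, then $y\in\overline V$, a contradiction. If $p(V)=0$, then $p(V^\perp)=1$ forces $x\in\overline{V^\perp}$; but $V$ is an open neighborhood of $x$ disjoint from $V^\perp$, so $x\notin\overline{V^\perp}$, again a contradiction.

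For continuity of $f$ at $p_0\in P$, given an open neighborhood $U$ of $f(p_0)$, I choose $V\in\ropen(X)$ with $f(p_0)\in V\subseteq\overline V\subseteq U$. If $p_0(V)=0$ then $p_0(V^\perp)=1$ and $f(p_0)\in\overline{V^\perp}$, contradicting $f(p_0)\in V$ together with the identity $V\cap\overline{V^\perp}=\emptyset$; hence $p_0(V)=1$, so $p_0\in\Phi(V)$, and for every $p\in\Phi(V)$, $\overline V\in\V_p$ gives $f(p)\in\overline V\subseteq U$. Thus $\Phi(V)$ is a clopen neighborhood of $p_0$ contained in $f^{-1}(U)$. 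For surjectivity, fix $x\in X$ and observe that $\mathcal F_x:=\{V\in\ropen(X):x\in V\}$ is a proper filter in $\ropen(X)$ (it is upward closed, closed under meets because meets are intersections, and omits $\emptyset$). Extending $\mathcal F_x$ to an ultrafilter yields a point $p\in P$ with $\mathcal F_x\subseteq\{V:p(V)=1\}$; if one had $p(V)=1$ and $x\notin\overline V$, then $V^\perp\in\mathcal F_x$ would force $p(\neg V)=1$ as well, which is impossible in an ultrafilter. So $x\in\overline V$ whenever $p(V)=1$, i.e., $f(p)=x$.

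Finally, $P$ is projective by Remark~\ref{stas1}, so it remains to verify that $(P,f)$ is essential, after which rigidity and hence the projective cover property follow from \cite[Proposition~2.13]{HadwinPaulsenInPrAnTo}. Let $F\subsetneq P$ be closed; since the clopen sets form a base for $P$, the nonempty open set $P\setminus F$ contains a nonempty clopen set, which by Fact~\ref{StoneTh} equals $\Phi(V)$ for some nonzero $V\in\ropen(X)$. Each $p\in F$ satisfies $p\notin\Phi(V)$, so $p(V^\perp)=1$ and $f(p)\in\overline{V^\perp}\subseteq X\setminus V\subsetneq X$, yielding $f(F)\subsetneq X$. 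The recurring subtle point throughout is the translation of Boolean operations in $\ropen(X)$ into their set-theoretic meanings—in particular the identity $V\cap\overline{V^\perp}=\emptyset$ for regular open $V$—which is the key ingredient in the singleton step, in the continuity argument, and in producing a clopen witness for irreducibility.
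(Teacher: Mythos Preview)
Your argument is correct in every part: the finite intersection property via $p$ being a homomorphism, the singleton step via the dichotomy on $p(V)$, continuity via the clopen neighborhood $\Phi(V)$, surjectivity via extending the filter $\mathcal F_x$ to an ultrafilter, and essentiality via a clopen witness inside $P\setminus F$. The repeated use of $V\cap\overline{V^\perp}=\emptyset$ is exactly the right hinge, and your justification (an open set disjoint from $V^\perp$ is disjoint from its closure) is sound.

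As for comparison with the paper: the paper does not supply its own proof of this theorem---it simply refers the reader to \cite[Section~3]{GleasonPrToSp} or \cite{StraussExDiSp}. Your write-up therefore furnishes a complete, self-contained argument in place of the citation, and it is entirely in keeping with the Stone-duality viewpoint the paper adopts (working directly with $P$ as the dual of $\ropen(X)$ and with the clopen sets $\Phi(V)$). One cosmetic remark: in the surjectivity step, the contradiction ``$p(V)=1$ and $p(\neg V)=1$'' is impossible already because $p$ is a Boolean homomorphism (so $p(0)=0$), not merely because $p$ corresponds to an ultrafilter; the phrasing ``impossible in an ultrafilter'' is not wrong, just slightly indirect.
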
  

Our next goal is to describe the inverse of the map $\Phi$ given
in~\eqref{phidef} in terms of the map $f$ given in
Theorem~\ref{pcover}; this is accomplished in Proposition~\ref{PhiDes}
below.
\begin{lemma}\label{onto}  Let $B\in\ropen(X)$.
  Then $B\subseteq
  f(\Phi(B))\subseteq \overline{B}$.
\end{lemma}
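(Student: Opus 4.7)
The plan is to handle the two inclusions separately; the right-hand one is immediate from the construction of $f$, while the left-hand one requires a Stone-duality argument to produce a point of $P$ sitting over $x_0$.

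For $f(\Phi(B))\subseteq \overline{B}$: if $p\in \Phi(B)$ then $p(B)=1$, so $\overline{B}\in \V_p$ and therefore $f(p)\in\bigcap \V_p\subseteq \overline{B}$ by Theorem~\ref{pcover}. No further work is needed here.

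For $B\subseteq f(\Phi(B))$, fix $x_0\in B$; the goal is to manufacture $p\in \Phi(B)$ with $f(p)=x_0$. The natural candidate comes from extending
\[\mathcal{F}_0:=\{V\in \ropen(X): x_0\in V\}\]
to an ultrafilter of $\ropen(X)$. Using $V_1\wedge V_2=V_1\cap V_2$ from \eqref{ropenops}, $\mathcal{F}_0$ is closed under meets and is visibly upward closed, hence is a proper filter in the Boolean algebra $\ropen(X)$. By the Boolean ultrafilter theorem, $\mathcal{F}_0$ extends to an ultrafilter $\mathcal{U}$, which under the standard Stone correspondence is precisely the set $\{V\in\ropen(X): p(V)=1\}$ for some $p\in P$. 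Since $x_0\in B$ places $B$ in $\mathcal{F}_0\subseteq\mathcal{U}$, this $p$ satisfies $p(B)=1$, so $p\in\Phi(B)$.

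To finish I must check that $f(p)=x_0$, and this is where Fact~\ref{ropfacts}(a) enters decisively: given any open neighborhood $U$ of $x_0$, that fact supplies $V\in\ropen(X)$ with $x_0\in V\subseteq \overline{V}\subseteq U$. Then $V\in\mathcal{F}_0\subseteq\mathcal{U}$, hence $p(V)=1$ and $\overline{V}\in\V_p$, so $f(p)\in\overline{V}\subseteq U$. Thus $f(p)$ lies in every open neighborhood of $x_0$, and Hausdorffness of $X$ forces $f(p)=x_0$. I do not anticipate any real obstacle; the only subtlety worth stating carefully is the Stone-duality identification between ultrafilters in $\ropen(X)$ and points of $P$, which is classical and is the content of the references to Halmos' book cited in Standing Assumption~\ref{stas1}.
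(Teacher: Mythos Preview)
Your proof is correct and follows essentially the same strategy as the paper's: both form the neighborhood filter $\{V\in\ropen(X): x_0\in V\}$ and use Fact~\ref{ropfacts}(a) to pin down $f(p)=x_0$. The only cosmetic difference is in how $p$ is produced---the paper uses compactness of $P$ together with Fact~\ref{StoneTh} to intersect the family $\{\Phi(G):x_0\in G\}$, while you invoke the ultrafilter theorem directly---but these are equivalent moves under Stone duality.
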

\begin{proof}
Let $p\in \Phi(B)$. Then $p(B)=1$, so $B\in\V_p$.  Thus 
$f(p)\in \overline B$, which gives
$f(\Phi(B))\subseteq \overline B$.   

We now show
$B\subseteq f(\Phi(B))$.  To do this, choose $x\in B$ and set 
\[\G_x:=\{G\in \ropen(X): x\in G\}.\]

By Fact~\ref{ropfacts}(a),  if $H\subseteq X$
is an open neighborhood of $x$, then there exists $H_1\in\ropen(X)$ such
that $x\in H_1\subseteq \overline H_1 \subseteq H$.  It follows
that  \begin{equation}\label{onto1} \bigcap \{\overline G:
  G\in \G_x\}=\{x\}.
\end{equation}

If $G_1, \dots, G_n\in\G_x$, then $\bigcap_{j=1}^n G_j$ is not the
zero element of $\ropen(X)$ because 
$x\in\bigcap_{j=1}^n G_j$.   Therefore,   
\[\Phi\left(\bigcap_{j=1}^n G_j\right)\stackrel{\eqref{StoneTh}}{=} \bigcap_{n=1}^n \Phi(G_j)\neq \emptyset.\]  Thus
$\{\Phi(G): G\in \G_x\}$ is a collection of clopen sets in $P$ having
the finite intersection property.  Fix
\[p\in \bigcap \{\Phi(G): G\in\G_x\}.\]  If $G\in
\G_x$,~\eqref{phidef} shows $p(G)=1$.   Thus
 $\G_x\subseteq \{U\in \ropen(X): p(U)=1\}$, so by definition of
 $f$, 
$f(p)\in \overline G$ for every $G\in
\G_x$. By~\eqref{onto1},  $f(p)=x$.   Since $B\in \G_x$, 
$p\in \Phi(B)$.  Thus
$x\in f(\Phi(B))$.
\end{proof}

We now describe the inverse of $\Phi$.   For $E\in \ropen(P)$,
let
\begin{equation}\label{psidef}
  \Psi(E):=\intr(f(E)).
\end{equation}
Since elements of $\ropen(P)$ are compact subsets of $P$ and $f$ is continuous,
$\Psi$ is a mapping of $\ropen(P)$ into   $\ropen(X)$. 
\begin{corollary}\label{ontocor}  With $\Phi$ and $\Psi$ defined as in
  \eqref{phidef} and \eqref{psidef}, $\Psi=\Phi^{-1}$.
\end{corollary}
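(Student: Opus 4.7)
The plan is to show $\Psi \circ \Phi = \mathrm{id}_{\ropen(X)}$; because Fact~\ref{StoneTh} already tells us $\Phi$ is a bijection (in fact a Boolean algebra isomorphism), this single identity is enough to conclude $\Psi = \Phi^{-1}$. So the entire burden of the corollary is to evaluate $\Psi(\Phi(B))$ for an arbitrary $B \in \ropen(X)$ and verify that we recover $B$.

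To do this, I would start from the definition $\Psi(\Phi(B)) = \intr(f(\Phi(B)))$ and immediately feed in the sandwich provided by Lemma~\ref{onto}, namely $B \subseteq f(\Phi(B)) \subseteq \overline{B}$. Applying the monotone operation $\intr(\cdot)$ across this chain gives
\[
\intr(B) \;\subseteq\; \intr(f(\Phi(B))) \;\subseteq\; \intr(\overline{B}).
\]
Since $B$ is a regular open set, both endpoints collapse to $B$: the left one because $B$ is open, and the right one because $B = B^{\dperp} = \intr(\overline{B})$ by definition of regularity. Therefore $\Psi(\Phi(B)) = B$, as required.

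The only subtlety worth a sentence of care is that $\Psi$ actually lands in $\ropen(X)$ so that composing with $\Phi^{-1}$ makes sense; this is already noted in the paragraph preceding the corollary, using that $E \in \ropen(P)$ is clopen hence compact and that $f$ is continuous, so $f(E)$ is closed and $\intr(f(E))$ is a regular open set. There is no real obstacle here, just the slightly delicate observation that both sides of Lemma~\ref{onto} collapse to $B$ precisely because of the regularity hypothesis on $B$ — this is the one place where being a regular open set (rather than merely open) is used.
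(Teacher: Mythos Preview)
Your proof is correct and follows essentially the same approach as the paper: both arguments apply $\intr(\cdot)$ to the sandwich $B \subseteq f(\Phi(B)) \subseteq \overline{B}$ from Lemma~\ref{onto}, use regularity of $B$ to collapse both ends to $B$, and then invoke Fact~\ref{StoneTh} to conclude $\Psi = \Phi^{-1}$ from the one-sided identity $\Psi \circ \Phi = \id$. The only difference is presentational---the paper checks the two inclusions separately while you handle them in a single monotone chain.
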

\begin{proof}
  Let $B\in\ropen(X)$.   We claim $B=\Psi(\Phi(B))$.
  Since $f(\Phi(B))\subseteq \overline B$,
  \[\Psi(\Phi(B))\stackrel{\eqref{psidef}}{=}\intr(f((\Phi(B))))
    \stackrel{\eqref{onto}}{\subseteq}
    \intr(\overline B)=B.\] For the reverse inclusion,
  Lemma~\ref{onto} gives
  $B\subseteq f(\Phi(B))$, so
  \[B=\intr(B)\subseteq \intr(f(\Phi(B))=\Psi(\Phi(B)),\] establishing
  the claim.

  Thus $\Psi\circ \Phi=\id|_{\ropen(X)}$.  By Fact~\ref{StoneTh}, $\Phi$ is
an isomorphism, and therefore $\Psi=\Phi^{-1}$.
\end{proof}

The definition of $\Psi$ is in terms of $f$, but the definition of
$\Phi$ is not.  Here is a  description of $\Phi$ in terms of $f$.  

\begin{proposition}\label{PhiDes}   For each $V\in \ropen(X)$,
  \begin{equation} \label{PhiDes00}\Phi(V)=\cl(f^{-1}(V)).
  \end{equation}
\end{proposition}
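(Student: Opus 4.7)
The plan is to invoke the Boolean algebra isomorphism $\Psi = \Phi^{-1}$ established in Corollary~\ref{ontocor}. Specifically, if I can verify that $\cl(f^{-1}(V))$ belongs to the domain $\ropen(P)$ of $\Psi$ and that $\Psi(\cl(f^{-1}(V))) = V$, then combining this with the identity $\Psi(\Phi(V)) = V$ and the injectivity of $\Psi$ will force $\cl(f^{-1}(V)) = \Phi(V)$.

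First I would check that $\cl(f^{-1}(V))$ lies in $\ropen(P)$. Since $V$ is open in $X$ and $f$ is continuous, $f^{-1}(V)$ is open in $P$; and because $P$ is extremally disconnected by Remark~\ref{stas1}, the closure of any open set in $P$ is open. Thus $\cl(f^{-1}(V))$ is clopen in $P$, which is exactly what it means to be a regular open subset of $P$.

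Next I would compute $f(\cl(f^{-1}(V))) = \overline V$. One inclusion is just the continuity of $f$ combined with the surjectivity identity $f(f^{-1}(V)) = V$:
\[ f(\cl(f^{-1}(V))) \subseteq \cl(f(f^{-1}(V))) = \overline V. \]
For the reverse, $V = f(f^{-1}(V)) \subseteq f(\cl(f^{-1}(V)))$, and $f(\cl(f^{-1}(V)))$ is the continuous image of a compact subset of $P$, hence closed in $X$; taking closures in $X$ yields $\overline V \subseteq f(\cl(f^{-1}(V)))$. Together with the regularity of $V$ and definition~\eqref{psidef}, this gives
\[ \Psi(\cl(f^{-1}(V))) = \intr(f(\cl(f^{-1}(V)))) = \intr(\overline V) = V = \Psi(\Phi(V)), \]
and injectivity of $\Psi$ (Corollary~\ref{ontocor}) then forces $\cl(f^{-1}(V)) = \Phi(V)$.

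The only step that genuinely uses the projective-cover hypothesis is the opening observation that $\cl(f^{-1}(V))$ is clopen, which rests on the extremal disconnectedness of $P$. The rest is bookkeeping with continuous images, closures, and the already-established identity $\Psi\circ\Phi = \id$; no new construction on the level of points of $P$ is required.
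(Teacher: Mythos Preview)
Your proof is correct and is in fact cleaner than the paper's. Both arguments finish the same way---show $\Psi(\cl(f^{-1}(V)))=V$ and invoke injectivity of $\Psi$---but they reach that equality differently. The paper first returns to the explicit description of $f$ from Theorem~\ref{pcover}: it argues pointwise that $p\in f^{-1}(V)$ forces $p(V)=1$ (otherwise $p(\neg V)=1$ would give $f(p)\in\overline{\neg V}$, contradicting $f(p)\in V$), obtaining the inclusion $\cl(f^{-1}(V))\subseteq\Phi(V)$; it then applies the order-preserving map $\Psi$ to get $\Psi(\cl(f^{-1}(V)))\subseteq V$, and picks up the reverse inclusion from $V\subseteq f(\cl(f^{-1}(V)))$. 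You bypass the ultrafilter description entirely by computing $f(\cl(f^{-1}(V)))=\overline V$ directly, using only that $f$ is a closed surjection (continuous map from compact to Hausdorff) and that $V$ is regular open. Your route makes transparent that the formula $\Phi(V)=\cl(f^{-1}(V))$ follows from extremal disconnectedness of $P$ together with Corollary~\ref{ontocor} alone, and does not rely on the particular construction of $P$ as the Stone dual of $\ropen(X)$; this is precisely the observation that allows the paper to pass to an arbitrary projective cover in Proposition~\ref{fineform}.
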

\begin{proof}
  The first step is to show that for $V\in \ropen(X)$,
  \begin{equation}\label{PhiDes1}
    \cl(f^{-1}(V))\subseteq \Phi(V).
  \end{equation}
  To do this, it is convenient to verify that for all $V\in \ropen(X)$,
  \begin{equation}\label{PhiDes2}
    f^{-1}(\neg V) \subseteq \Phi(\neg V).
  \end{equation}
  Let $p\in f^{-1}(\neg V)$.  Then $f(p)\in \neg V=X\setminus \cl(V)$.
  Note that $p(\neg V)=1$: otherwise
$p(V)=1$, which by definition of $f$, leads to the conclusion
that $f(p)\in \cl(V)$, a contradiction.  
But \[\Phi(\neg V)=\{p\in P:
p(\neg V)=1\},\]  so $p\in \Phi(\neg V)$.   This gives~\eqref{PhiDes2}.

Replacing $\neg V$ with $V$ in~\eqref{PhiDes2} gives
\[f^{-1}(V)\subseteq \Phi(V).\]
Since $\Phi(V)$ is clopen,~\eqref{PhiDes1} follows.

Applying $\Psi$ to each side
of~\eqref{PhiDes1} yields
\begin{equation}\label{PhiDes3} \intr(f(\cl(f^{-1}(V))))\subseteq V.
\end{equation}  But
\[V=f(f^{-1}(V))\subseteq f(\cl(f^{-1}(V))),\]
so \begin{equation*}
  V\subseteq \intr(
  f(\cl(f^{-1}(V)))) \stackrel{\eqref{PhiDes3}}{\subseteq} V,
  \dstext{that is,} V = \intr(f(\cl(f^{-1}(V)))). 
\end{equation*}
Thus, 
\[\Psi(\cl (f^{-1}(V)))= V\stackrel{\eqref{ontocor}}{=}
  \Psi(\Phi(V)).\] Since $\Psi$ is
one-to-one, we obtain~\eqref{PhiDes00}.
\end{proof}

For the remainder of this section, we relax Standing
Assumption~\ref{stas1}:  $X$ will remain a compact Hausdorff space,
but we no longer assume that the projective space $P$ is the dual of
$\ropen(X)$.

While not unique, Gleason observed the projective cover is nearly
unique in  a sense we now explain. 
Suppose for $i=1,2$,  $(P_i,f_i)$ is a projective cover for for $X$.
For $i\neq j$, projectivity yields maps $\phi_{ij}: P_j\rightarrow P_i$ such that
\[f_j=f_i\circ \phi_{ij}.\]  Then $f_i\circ\phi_{ij}\circ
  \phi_{ji}=f_j\circ\phi_{ji}=f_i$, so rigidity of the cover $(P_i,
  f_i)$  yields
  \[\id_{P_1}=\phi_{12}\circ\phi_{21}\dstext{and}
    \id_{P_2}=\phi_{21}\circ\phi_{12},\] that is,
  $\phi_{12}=\phi_{21}^{-1}$.   Note that if $\phi_{21}$ is chosen, then any choice for
  $\phi_{12}$ is necessarily the inverse of $\phi_{21}$, so actually
  the $\phi_{ij}$ are unique.  These considerations give the following.
  \begin{flexstate}{Fact}{Gleason} \label{pcu}  Given projective covers $(P_i, f_i)$ for
    $X$, there exists a unique homeomorphism $\phi: P_1\rightarrow
    P_2$ such that $f_2\circ\phi=f_1$.
  \end{flexstate}

Fact~\ref{pcu} removes the need to use the particular projective
cover $(P,f)$ described in Theorem~\ref{pcover} when describing the maps $\Psi$
and $\Phi$.  Thus we obtain the following, which is the main result of this section. 
\begin{proposition}\label{fineform}   Let $(P,f)$ be any projective cover
  for the compact Hausdorff space $X$.  Define maps $\Phi:
  \ropen(X)\rightarrow\ropen(P)$ and $\Psi: \ropen(P)\rightarrow \ropen(X)$ by
  \begin{align}
    \Phi(V)&=\cl(f^{-1}(V)),& V\in \ropen(X) \label{fineform1}\\
    \intertext{and}
    \Psi(E)&=\intr(f(E)),&E\in\ropen(P). \label{fineform2}
  \end{align}
Then $\Phi$ and $\Psi$ are Boolean algebra isomorphisms and
$\Phi=\Psi^{-1}$.
\end{proposition}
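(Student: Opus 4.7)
The plan is to reduce Proposition~\ref{fineform} to the special case already handled in Corollary~\ref{ontocor} and Proposition~\ref{PhiDes}, using the essentially uniqueness of the projective cover recorded in Fact~\ref{pcu}. Write $(P_0,f_0)$ for the canonical projective cover constructed in Standing Assumption~\ref{stas1} and Theorem~\ref{pcover}, and write $\Phi_0,\Psi_0$ for the corresponding maps of~\eqref{phidef} and~\eqref{psidef}. By Fact~\ref{StoneTh}, Corollary~\ref{ontocor}, and Proposition~\ref{PhiDes}, we already know that $\Phi_0,\Psi_0$ are mutually inverse Boolean algebra isomorphisms between $\ropen(X)$ and $\ropen(P_0)$, and that $\Phi_0(V)=\cl(f_0^{-1}(V))$.

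Given an arbitrary projective cover $(P,f)$, Fact~\ref{pcu} supplies a unique homeomorphism $\phi:P\to P_0$ with $f_0\circ\phi=f$. Since $\phi$ is a homeomorphism, pullback along $\phi$ induces a Boolean algebra isomorphism $\phi^*:\ropen(P_0)\to\ropen(P)$ given by $\phi^*(E)=\phi^{-1}(E)$; this is standard once one observes that $\phi^{-1}$ preserves closure, interior, and intersections, and that the regular open sets in a Stonean space are precisely the clopen sets. The next step is to verify the compatibility identities
\[
\Phi=\phi^*\circ\Phi_0\qquad\text{and}\qquad \Psi=\Psi_0\circ(\phi^*)^{-1},
\]
where $\Phi,\Psi$ are the maps defined by~\eqref{fineform1} and~\eqref{fineform2}. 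For the first identity, since $\phi$ is a homeomorphism it commutes with $\cl(\,\cdot\,)$, so
\[
\phi^*(\Phi_0(V))=\phi^{-1}(\cl(f_0^{-1}(V)))=\cl(\phi^{-1}(f_0^{-1}(V)))=\cl((f_0\circ\phi)^{-1}(V))=\cl(f^{-1}(V))=\Phi(V).
\]
For the second identity, $f_0\circ\phi=f$ gives $f_0(\phi(E))=f(E)$, hence $\Psi_0((\phi^*)^{-1}(E))=\Psi_0(\phi(E))=\intr(f_0(\phi(E)))=\intr(f(E))=\Psi(E)$.

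With these identities in hand, the conclusion is immediate: $\Phi$ is the composition of two Boolean algebra isomorphisms and is therefore itself a Boolean algebra isomorphism $\ropen(X)\to\ropen(P)$; likewise for $\Psi$. Moreover
\[
\Psi\circ\Phi=(\Psi_0\circ(\phi^*)^{-1})\circ(\phi^*\circ\Phi_0)=\Psi_0\circ\Phi_0=\id_{\ropen(X)},
\]
and symmetrically $\Phi\circ\Psi=\id_{\ropen(P)}$, so $\Phi=\Psi^{-1}$.

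I do not expect any serious obstacle here. The only mildly delicate point is the verification that $\phi^*$ carries $\ropen(P_0)$ onto $\ropen(P)$ and is a Boolean algebra isomorphism; but since both spaces are Stonean and $\phi$ is a homeomorphism, this reduces to the observation that $\phi^{-1}$ is a bijection of clopen sets and preserves unions, intersections, and complements, and the operations of~\eqref{ropenops} all agree with their set-theoretic counterparts on clopen sets in an extremally disconnected space.
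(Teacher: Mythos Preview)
Your proposal is correct and follows exactly the approach the paper takes: the paper states Proposition~\ref{fineform} immediately after Fact~\ref{pcu} with the remark that ``Fact~\ref{pcu} removes the need to use the particular projective cover $(P,f)$ described in Theorem~\ref{pcover},'' and offers no further proof. Your write-up simply supplies the routine details of transporting $\Phi_0$ and $\Psi_0$ through the homeomorphism $\phi$, which is precisely what the paper leaves implicit.
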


We conclude this section with some  corollaries to Proposition~\ref{fineform}.
Recall that a closed set $F\subseteq X$ is a \textit{regular closed
  set} if $F=\cl(\intr(F))$.    Also, recall that if $B$ is a Boolean
algebra and $e\in B$, the \textit{relativization} of $B$ to $e$ is the
Boolean algebra, $B(e):=\{e\wedge x: x\in B\}$.

\begin{corollary}\label{reclBA}   Let $X$ be a compact Hausdorff
  space with projective cover $(P,f)$,  let  $F\subseteq X$ be a regular
  closed set, and let $V:=\intr(F)$.  Then $(\Phi(V), f|_{\Phi(V)})$ is
  a projective cover for $F$.

  Furthermore, if $\Phi_F:\ropen(F)\rightarrow
  \ropen(\Phi(V))$ is the isomorphism obtained by applying Proposition~\ref{fineform} to
  $F$ and $(\Phi(V), f|_{\Phi(V)})$, then $\Psi\circ \Phi_F$ is an isomorphism of
  $\ropen(F)$ onto the
  relativization of $\ropen(X)$ to $V$.
\end{corollary}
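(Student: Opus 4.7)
The plan is to prove the two claims of the corollary in sequence, relying on Proposition~\ref{fineform}, Lemma~\ref{onto}, and Corollary~\ref{ontocor}. For the first claim, that $(\Phi(V), f|_{\Phi(V)})$ is a projective cover of $F$, I would check projectivity, surjectivity, and essentiality separately. Projectivity is immediate: since $\Phi(V) \in \ropen(P)$ and $P$ is extremally disconnected, $\Phi(V)$ is clopen in $P$, hence a compact extremally disconnected space, hence projective. For surjectivity, Lemma~\ref{onto} gives $V \subseteq f(\Phi(V)) \subseteq \cl(V) = F$; as $f(\Phi(V))$ is the continuous image of a compact set, it is closed, so it contains $\cl(V) = F$ and therefore equals $F$.

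The substantial step is showing $f|_{\Phi(V)}$ is irreducible. Given a closed set $C \subseteq \Phi(V)$ with $f(C) = F$, I would form $C' := C \cup (P \setminus \Phi(V))$, which is closed in $P$ since $\Phi(V)$ is clopen, and show that $f(C') = X$. For this, the key observation is $X \setminus F \subseteq f(P \setminus \Phi(V))$: given $x \in X \setminus F$, choose $p \in P$ with $f(p) = x$; then $f^{-1}(X \setminus F)$ is an open neighborhood of $p$ disjoint from $f^{-1}(V)$, so $p \notin \cl(f^{-1}(V)) = \Phi(V)$, and hence $x \in f(P \setminus \Phi(V))$. Irreducibility of $f$ then forces $C' = P$, i.e., $C = \Phi(V)$. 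Together with projectivity of $\Phi(V)$, essentiality implies rigidity by \cite[Proposition~2.13]{HadwinPaulsenInPrAnTo}, completing the first claim.

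For the second claim, I would identify $\ropen(\Phi(V))$ with the principal sub-Boolean algebra $\{E \in \ropen(P) : E \subseteq \Phi(V)\}$, which is exactly the relativization of $\ropen(P)$ to $\Phi(V)$: in an extremally disconnected space the regular open sets coincide with the clopen sets, and a subset of the clopen set $\Phi(V)$ is clopen in the subspace $\Phi(V)$ iff it is clopen in $P$. Corollary~\ref{ontocor} gives $\Psi(\Phi(V)) = V$, and since any Boolean algebra isomorphism carries a principal relativization to a principal relativization, $\Psi$ restricts to an isomorphism of the relativization of $\ropen(P)$ to $\Phi(V)$ onto the relativization of $\ropen(X)$ to $V$. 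Composing with the isomorphism $\Phi_F$ from Proposition~\ref{fineform} applied to the projective cover just constructed yields the asserted isomorphism. The main obstacle is the irreducibility step in the first claim---specifically the topological fiber-chase $X \setminus F \subseteq f(P \setminus \Phi(V))$; once this is in hand, the remainder of the proof reduces to straightforward applications of the results in the preceding sections.
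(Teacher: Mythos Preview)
Your proof is correct and follows the same overall route as the paper: establish $f(\Phi(V))=F$, note $\Phi(V)$ is projective as a clopen subset of a Stonean space, and for the second claim identify $\ropen(\Phi(V))$ with the relativization of $\ropen(P)$ to $\Phi(V)$ and restrict $\Psi$.

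The one noteworthy difference is that you supply an explicit irreducibility argument for $f|_{\Phi(V)}$ (the fiber-chase showing $X\setminus F\subseteq f(P\setminus\Phi(V))$ and then invoking irreducibility of $f$ on $C':=C\cup(P\setminus\Phi(V))$), whereas the paper's proof simply asserts that $(\Phi(V),f|_{\Phi(V)})$ is a projective cover once surjectivity and projectivity of $\Phi(V)$ are in hand, without spelling out essentiality. Your argument is correct and fills what is arguably a gap in the paper's presentation; an equivalent and slightly more streamlined version uses that $\Phi$ is a Boolean isomorphism to write $P\setminus\Phi(V)=\Phi(\neg V)$ and then Lemma~\ref{onto} to get $\neg V\subseteq f(\Phi(\neg V))$ directly. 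For the surjectivity step, your use of Lemma~\ref{onto} together with closedness of $f(\Phi(V))$ is marginally cleaner than the paper's explicit net argument for the inclusion $f(\Phi(V))\subseteq F$, though both amount to the same computation via $\Phi(V)=\cl(f^{-1}(V))$.
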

\begin{proof}
We start by establishing $f(\Phi(V))=F$.  Since $\Phi(V)$ is clopen and 
$f(\Phi(V))=f(\cl(f^{-1}(V)))\supseteq f(f^{-1}(V))=V$,
$f(\Phi(V))\supseteq \cl(V) =F$.  For the reverse
inclusion, let $x\in f(\Phi(V))$ and choose $y\in \Phi(V)$ with
$f(y)=x$.   Then there exists a net $y_\lambda\in f^{-1}(V)$ with
$y=\lim y_\lambda$.   So $x=\lim f(y_\lambda)\in \cl(V)$.

As every clopen subset of a projective space is
projective, $(\Phi(V), f|_{\Phi(V)})$ is a projective cover for
$F$.
Since 
$\ropen(\Phi(V))$ is the relativization of 
$\ropen(P)$ to $\Phi(V)$, $\Psi|_{\ropen(\Phi(V))}$ is an isomorphism
onto the relativization of $\ropen(X)$ to $V$.   So $\Psi\circ \Phi_F$
is an isomorphism of $\ropen(F)$ onto the relativization of
$\ropen(X)$ to $V$.
\end{proof}

Let $B$ be a complete Boolean algebra.  Let $a:=\bigvee\{x\in B:
x\text{ is an atom of $B$}\}$ and let $B(a)= \{a\wedge b: b\in B\}$ and
$B(\neg a)=\{b\wedge \neg a: b\in B\}$ be the relativizations of $B$ to
$a$ and $\neg a$ respectively.     For any $b\in B$, $b=(a\wedge
b)\vee(\neg a\wedge b)$, so  $B$ is isomorphic to the
direct product $B(a)\times B(\neg a)$ of an atomic Boolean algebra with an
atomless Boolean algebra.

We next show how  Proposition~\ref{fineform}
  provides insights into this  decomposition of    
 $\ropen(X)$.  
For any space $Y$, let $\isol(Y)$ be the set of
isolated points of $Y$ and put
\begin{equation}\label{atna}
  Y_a:=\cl(\isol(Y))\dstext{and} Y_c:= \cl((Y\setminus Y_a)).
\end{equation}
Then $Y_a$ and $Y_c$ are regular closed sets.  Notice that $\isol(Y)$
and $(Y\setminus Y_a)$ are disjoint open sets whose union is dense in
$X$.  Since $\intr(\cl(\isol(Y)))=\intr(Y_a)$ and
$\intr(\cl(Y\setminus Y_a))=\intr(Y_c)$, the sets $\intr(Y_a)$ and
$\intr(Y_c)$ are disjoint and have dense union.  Thus in the Boolean
algebra $\ropen(Y)$,
\begin{equation}\label{anegc}
   \intr(Y_c)
=\neg\intr(Y_a).
\end{equation}

In the following we use  notation found in Corollary~\ref{reclBA}.
 \begin{corollary}\label{Cor:D}
   Let $(P,f)$ be a projective cover for the compact Hausdorff space
   $X$.  The following
   statements hold.
   \begin{enumerate}
   \item\label{Cor:D3} $f|_{\isol(P)}$ is a bijection of $\isol(P)$
     onto $\isol(X)$. 
  \item\label{Cor:D2}  Define $\Theta:
       \ropen(X_a)\times\ropen(X_c)\rightarrow \ropen(X)$ by
       \[\Theta(W_a,W_c):=\Psi(\Phi_{X_a}(W_a))\vee\Psi(\Phi_{X_c}(W_c)),\quad
         (W_a,W_c)\in \ropen(X_a)\times \ropen(X_c).\] Then $\Theta$
       is an isomorphism of Boolean algebras.
   \item\label{Cor:D1} $\ropen(X_a)$ is an atomic Boolean algebra and
     $\ropen(X_c)$ is an atomless Boolean algebra.
   \item \label{Cor:D4}  $X_c$ is a perfect set.
   \end{enumerate}
 \end{corollary}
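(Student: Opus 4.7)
The plan is to dispatch (a) first from the irreducibility of $f$, then use the correspondence between isolated points and atoms of $\ropen(\cdot)$ to derive (c) and (d), and finally obtain (b) from Corollary~\ref{reclBA} together with the identity~\eqref{anegc}. The main obstacle will be part (a): isolated points of $P$ must not merely map to some point of $X$ but to \emph{isolated} points, and this requires the "every proper closed subset of $P$ misses a point of $X$" form of irreducibility to conclude that $\{f(p)\}$ is open in $X$.

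For (a), let $p\in\isol(P)$. Since $P\setminus\{p\}$ is a proper closed subset of $P$, irreducibility of $f$ produces a point $y\in X\setminus f(P\setminus\{p\})$; since $f$ is surjective, $f^{-1}(\{y\})=\{p\}$, so $f(p)=y$ and $f(P\setminus\{p\})=X\setminus\{y\}$. This last set is compact, hence $\{y\}=\{f(p)\}$ is open, i.e.\ $f(p)\in\isol(X)$. The relation $f^{-1}(\{f(p)\})=\{p\}$ also proves injectivity of $f|_{\isol(P)}$. For surjectivity, given $y\in\isol(X)$ let $K:=f^{-1}(\{y\})$, which is clopen and nonempty. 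If $K$ had two distinct points $p,p'$, then applying irreducibility to the proper closed set $P\setminus\{p\}$ would yield a point $y'\in X$ with $f^{-1}(\{y'\})=\{p\}$; but $p'\in K\setminus\{p\}$ shows $y=f(p')\in f(P\setminus\{p\})$, so $y'\neq y$, forcing $f(p)=y'\neq y=f(p)$, a contradiction. Hence $K$ is a singleton $\{p\}$, which is clopen, so $p\in\isol(P)$ with $f(p)=y$.

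For (c) and (d), first note that the atoms of $\ropen(Y)$ are exactly the singletons $\{y\}$ with $y\in\isol(Y)$, and that $\ropen(Y)$ is atomic iff $\isol(Y)$ is dense in $Y$. I would verify $\isol(X_a)=\isol(X)$: the inclusion $\isol(X)\subseteq\isol(X_a)$ holds because $X_a$ is closed in $X$; conversely, if $\{x\}$ is open in $X_a$, pick an open $U\subseteq X$ with $U\cap X_a=\{x\}$, and density of $\isol(X)$ in $X_a$ forces $U\cap\isol(X)=\{x\}$, so $x\in\isol(X)$. Since $\isol(X_a)$ is dense in $X_a$ by definition, $\ropen(X_a)$ is atomic. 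For $X_c$, observe first that the open set $X\setminus X_a$ has no isolated points (such a point would lie in $\isol(X)\subseteq X_a$, a contradiction). Now if some $x\in X_c$ were isolated in $X_c$, an $X$-open $U$ with $U\cap X_c=\{x\}$ would satisfy $U\cap(X\setminus X_a)\subseteq\{x\}$ because $X\setminus X_a\subseteq X_c$; density of $X\setminus X_a$ in $X_c$ forces $x\in X\setminus X_a$ and hence $U\cap(X\setminus X_a)=\{x\}$, exhibiting an isolated point of $X\setminus X_a$. This proves (d), and also that $\ropen(X_c)$ has no atoms, finishing (c).

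For (b), by~\eqref{anegc} the elements $\intr(X_a)$ and $\intr(X_c)$ are complementary in $\ropen(X)$, so the assignment $V\mapsto(V\wedge\intr(X_a),\,V\wedge\intr(X_c))$ is a Boolean algebra isomorphism of $\ropen(X)$ onto the product $\ropen(X)(\intr(X_a))\times\ropen(X)(\intr(X_c))$, with inverse $(V_a,V_c)\mapsto V_a\vee V_c$. Corollary~\ref{reclBA} identifies these two relativizations with $\ropen(X_a)$ and $\ropen(X_c)$ respectively, via the isomorphisms $\Psi\circ\Phi_{X_a}$ and $\Psi\circ\Phi_{X_c}$. Composing these two identifications with the join map yields precisely the map $\Theta$, which is therefore an isomorphism of Boolean algebras.
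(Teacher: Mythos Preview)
Your argument for (b), (c), and (d) is correct, though you take a slightly different route than the paper: you establish (c) and (d) by direct topological reasoning about $\isol(X_a)$ and $\isol(X_c)$, whereas the paper first proves (b) and then uses the isomorphism $\Theta$ to transport a hypothetical atom of $\ropen(X_c)$ into $\ropen(X)$, obtaining a point of $\isol(X)\cap X_c=\emptyset$. Your order is arguably more self-contained, since your proof of (c)--(d) does not depend on the projective cover at all.

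There is, however, a genuine gap in your surjectivity argument for (a). You write: ``applying irreducibility to the proper closed set $P\setminus\{p\}$'', where $p$ is an arbitrary point of $K=f^{-1}(\{y\})$. But you have not yet shown that $p$ is isolated, so $P\setminus\{p\}$ need not be closed; indeed a Stonean space can contain clopen sets with no isolated points (e.g.\ $\beta\bbN\setminus\bbN$), so one cannot simply pick $p\in K$ and assume $\{p\}$ is open. The fix is easy: since $K$ is clopen with at least two points and $P$ is totally disconnected, split $K=K_1\sqcup K_2$ into nonempty disjoint clopen sets. Then $P\setminus K_1$ \emph{is} a proper closed subset, and $f(P\setminus K_1)\supseteq f(K_2)\cup f(P\setminus K)=\{y\}\cup(X\setminus\{y\})=X$, contradicting irreducibility. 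Hence $K$ is a singleton $\{p\}$, and since $K$ is clopen, $p\in\isol(P)$.

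For comparison, the paper's proof of (a) avoids irreducibility entirely and instead invokes Proposition~\ref{fineform}: since $\Phi$ and $\Psi$ are Boolean isomorphisms, they carry atoms to atoms, and the atoms of $\ropen(X)$ and $\ropen(P)$ are exactly the isolated singletons; the explicit formulas $\Psi(\{p\})=\intr(\{f(p)\})$ and $\Phi(\{x\})=\cl(f^{-1}(\{x\}))$ then read off the bijection immediately. Your approach via irreducibility is more elementary in that it does not rely on the Boolean machinery, but it does require the extra care above.
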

 \begin{proof}
\eqref{Cor:D3} For $x\in
  X$,
Proposition~\ref{fineform} implies
   $\{x\}\in \ropen(X)$ if and
   only if $\{f^{-1}(x)\}\in \ropen(P)$.  As a singleton subset of a
   compact Hausdorff space is clopen
   if and only if the element it contains is an isolated point, 
   $f|_{\isol(P)}$ is a bijection of $\isol(P)$ onto  $\isol(X)$. 

\eqref{Cor:D2}  Let $V_a:=\intr(X_a)$ and $V_c:=\intr(X_c)$.
By~\eqref{anegc}, $V_c=\neg V_a$.  Now apply
Corollary~\ref{reclBA}.

\eqref{Cor:D1}  By construction, $\isol(X)$  is dense
in $X_a$.  For each $x\in \isol(X)$, $\{x\}$ is an atom of
$\ropen(X_a)$.  So $\bigvee\{\{x\}: x\in \isol(X)\}$ is the unit of
$\ropen(X_a)$.  Therefore, $\ropen(X_a)$ is atomic.

To show $\ropen(X_c)$ is atomless, we argue by contradiction.  If $e$ is an atom
of $\ropen(X_c)$, then there exists $x\in X_c$ so that
$e=\{x\}\in\ropen(X_c)$.  Then $\Theta(0,e)=\{x\}$ is an atom of $\ropen(X)$, so
 $x\in \isol(X)\cap X_c=\emptyset$, which is absurd.

 \eqref{Cor:D4}  Any isolated point of $X_c$ would produce an atom of
 $\ropen(X_c)$; therefore  by part~\eqref{Cor:D1}, $X_c$ is a perfect set.
 \end{proof}

\section{Essential Covers and Isomorphisms of the Lattice of Regular
  Open Sets}  \label{sec:ECI}

Suppose $X$ and $Y$ are compact Hausdorff spaces and $(Y, \pi)$ is
an essential cover of $X$.  Letting $(P,f)$ be a projective cover for
$X$, projectivity of $P$ implies the existence of a continuous
map $g:P\rightarrow Y$ with $f=\pi\circ
g$.   (Actually $g$ is unique, see~\cite[Corollary~3.22]{PittsZarikianUnPsExC*In}.)  Since $(P,f)$ and
$(Y,\pi)$ are essential covers, $g$ is necessarily surjective, and 
$(P,g)$ is an essential cover for $Y$.   Therefore, 
$(P,g)$ is a projective cover for $Y$. We thus have a 
commuting diagram as in Figure~\ref{CD}.

Let us use \begin{center}\begin{tabular}{lcl}
$\Psi_f: \ropen(P)\rightarrow
\ropen(X)$, && $\Psi_g:\ropen(P)\rightarrow \ropen(Y),$\\
$\Phi_f:\ropen(X)\rightarrow \ropen(P)$, &and&  
$\Phi_g:\ropen(Y)\rightarrow\ropen(P)$
           \end{tabular}
         \end{center}
         for
the isomorphisms obtained by applying  Proposition~\ref{fineform} to
$(P, f)$ and $(P, g)$ respectively.
Clearly 
\begin{equation}\label{pp1}
  \Psi:= \Psi_f\circ\Phi_g:\ropen(Y)\rightarrow\ropen(X)
  \dstext{and}\Phi:=\Psi_g\circ\Phi_f: \ropen(X)\rightarrow\ropen(Y) 
\end{equation}
are isomorphisms and $\Psi^{-1}=\Phi$. 

The purpose of this short section is to give formulae for $\Psi$ and
$\Phi$ in terms of $\pi$.
\begin{proposition} \label{YXiso} Suppose $X$ and $Y$ are compact Hausdorff spaces
  and $(Y,\pi)$ is an essential cover of $X$.   Then the Boolean algebra isomorphism $\Psi:
  \ropen(Y)\rightarrow \ropen(X)$ of~\eqref{pp1} is given by
  \begin{align}\Psi(U)&=\intr(\pi(\cl (U))), &U\in
    \ropen(Y). \label{YXiso1}\\
  \intertext{
   Furthermore, $\Phi=\Psi^{-1}$ is given by}
  \Phi(V)&=\intr(\cl(\pi^{-1}(V))), & 
    V\in\ropen(X). \label{YXiso2}
  \end{align}
\end{proposition}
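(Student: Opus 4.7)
The plan is to unfold the composition $\Psi = \Psi_f \circ \Phi_g$ (and similarly $\Phi = \Psi_g \circ \Phi_f$) using the explicit formulas in Proposition~\ref{fineform}, then simplify using the relation $f = \pi \circ g$. For $U \in \ropen(Y)$, this gives
\[
\Psi(U) = \Psi_f(\cl(g^{-1}(U))) = \intr\!\bigl(f(\cl(g^{-1}(U)))\bigr) = \intr\!\bigl(\pi(g(\cl(g^{-1}(U))))\bigr),
\]
and analogously $\Phi(V) = \intr\!\bigl(g(\cl(g^{-1}(\pi^{-1}(V))))\bigr)$ for $V \in \ropen(X)$, using $f^{-1}(V) = g^{-1}(\pi^{-1}(V))$. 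So everything reduces to simplifying expressions of the form $g(\cl(g^{-1}(W)))$ for an open set $W$ in $Y$.

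The key step I would isolate as a small lemma: if $g \colon P \to Y$ is a continuous surjection between compact Hausdorff spaces and $W \subseteq Y$ is open, then
\[
g(\cl(g^{-1}(W))) = \cl(W).
\]
The forward inclusion uses that $\cl(g^{-1}(W))$ is compact, so its image under $g$ is closed and contains $g(g^{-1}(W)) = W$ (surjectivity), hence contains $\cl(W)$. The reverse inclusion uses the general fact $g(\cl(A)) \subseteq \cl(g(A))$ for continuous $g$, applied to $A = g^{-1}(W)$, yielding $g(\cl(g^{-1}(W))) \subseteq \cl(W)$. This is the main (and only nontrivial) content of the argument.

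Applying the lemma with $W = U$ (open in $Y$) gives $\Psi(U) = \intr(\pi(\cl(U)))$, establishing \eqref{YXiso1}. Applying it with $W = \pi^{-1}(V)$ (open in $Y$ since $\pi$ is continuous) gives $\Phi(V) = \intr(\cl(\pi^{-1}(V)))$, establishing \eqref{YXiso2}. That $\Phi$ and $\Psi$ are mutually inverse Boolean algebra isomorphisms is already built into the definitions $\Psi = \Psi_f \circ \Phi_g$ and $\Phi = \Psi_g \circ \Phi_f$ together with Proposition~\ref{fineform}, so no additional verification is required. The main obstacle is really just recognizing that $f = \pi \circ g$ lets one reduce both composite maps to a single identity about continuous surjections of compact Hausdorff spaces; once that identity is noted, the rest is bookkeeping.
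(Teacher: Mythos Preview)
Your proof is correct and follows the same overall strategy as the paper: unfold $\Psi=\Psi_f\circ\Phi_g$ and $\Phi=\Psi_g\circ\Phi_f$ using Proposition~\ref{fineform}, then simplify using $f=\pi\circ g$. The difference lies in how the simplification is organized. The paper establishes the two identities $f(\cl(g^{-1}(U)))=\pi(\cl(U))$ and $g(\cl(f^{-1}(V)))=\cl(\pi^{-1}(V))$ separately, each by a bidirectional net argument. You instead isolate the single general fact that for any continuous surjection $g$ of compact Hausdorff spaces and any open $W$, one has $g(\cl(g^{-1}(W)))=\cl(W)$; applying this once to $W=U$ (then composing with $\pi$) and once to $W=\pi^{-1}(V)$ recovers both identities at a stroke. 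Your route is a modest but genuine streamlining: it avoids nets entirely, makes clear that nothing special about $P$ beyond compactness is used in this step, and shows that the two halves of the paper's argument are really the same computation. The paper's version, on the other hand, keeps the argument self-contained without appealing to the general closure inequality $g(\cl A)\subseteq\cl(g(A))$.
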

\begin{proof}
Throughout the proof, we use the notation discussed in the first paragraphs of this
section.

Let $U\in\ropen(Y)$ and set \[Q:=\Phi_g(U)\stackrel{\eqref{fineform1}}{=}\cl(g^{-1}(U))\in\ropen(P).\]   Let us
show that
\begin{equation}\label{YXiso3}
  f(Q)=\pi(\cl(U)).
\end{equation}
Suppose $x\in \pi(\cl(U))$ and choose  $y\in \cl(U)$ with $\pi(y)=x$.
Then there exists a net $(y_\lambda)$ in $U$ such that
$y_\lambda\rightarrow y$.  Since $f(g^{-1}(U))\subseteq
f(\cl(g^{-1}(U)))$ and $f=\pi\circ g$, we have
\[\pi(U)\subseteq f(\cl (g^{-1}(U))).\]  Therefore, for every
$\lambda$, $\pi(y_\lambda)\in f(\cl(g^{-1}(U)))$.  As
$y_\lambda\rightarrow y$ and $\pi$ is continuous, $x\in
f(\cl(g^{-1}(U)))$.  Thus
\[\pi(\cl(U))\subseteq f(\cl(g^{-1}(U)))=f(Q).\]  To obtain the
reverse inclusion, suppose $x\in f(Q)$.   By definition of $Q$,
there exists $p\in \cl(g^{-1}(U))$ such that $f(p)=x$.  Choose a net
$(p_\lambda)$ in $g^{-1}(U)$ so that $p_\lambda \rightarrow p$.  Then
\[x=f(p)=\lim f(p_\lambda)=\lim \pi(g(p_\lambda))\in \pi(\cl U).\]
We therefore obtain~\eqref{YXiso3}.

We conclude that  $\intr (f(Q))= \intr(\pi(\cl(U)))$, that is, for
every $U\in\ropen(Y)$,
\begin{equation}\label{XYiso3.5}
  \Psi_f(\Phi_g(U))=\Psi(U).
\end{equation}
This gives~\eqref{YXiso1}. 

Turning to~\eqref{YXiso2}, suppose $V\in\ropen(X)$ and
put \[R:=\Phi_f(V) \stackrel{\eqref{fineform1}}{=}\cl(f^{-1}(V))\in\ropen(P).\] We claim that
\begin{equation}\label{XYiso4}
  g(R)=\cl(\pi^{-1}(V)).
\end{equation}
Let $y\in g(R)$ and let $p\in R$ satisfy $g(p)=y$.  We may find a net
$(p_\lambda)$ in $f^{-1}(V)$ with $p_\lambda\rightarrow p$.  Note that
$p_\lambda\in g^{-1}(\pi^{-1}(V))$, because 
$\pi\circ g=f$.  Thus $g(p_\lambda)\in
\pi^{-1}(V)$. Since $g(p_\lambda)\rightarrow g(p)= y$, we obtain
$y\in\cl(\pi^{-1}(V))$.  Therefore,
\[g(R)\subseteq \cl(\pi^{-1}(V)).\]

For the reverse inclusion, suppose
$y\in \cl(\pi^{-1}(V))$.  Choose $y_\lambda\in \pi^{-1}(V)$ so that
$y_\lambda\rightarrow y$.  Since $g$ is surjective, for each
$\lambda$, we may choose $p_\lambda\in g^{-1}(\pi^{-1}(V))$ so that
$g(p_\lambda)=y_\lambda$.  By passing to a subnet if necessary,
compactness of $P$ allows us to 
assume that the net $(p_\lambda)$ converges to $p\in P$.
As  $p_\lambda\in g^{-1}(\pi^{-1}(V))=f^{-1}(V)$, we find $p\in \cl(f^{-1}(V))=R$.  But
$g(p)=\lim g(p_\lambda)=\lim y_\lambda =y$, so $y\in g(R)$.
Therefore~\eqref{XYiso4} holds.

Taking the interiors of both sides of~\eqref{XYiso4} we obtain
\[\Psi_g(\Phi_f(V))=\Phi(V),\] giving~\eqref{YXiso2}.
\end{proof}

\section{Essential Extensions and Isomorphisms of Regular Ideals}\label{sec:BARI}

Proposition~\ref{YXiso} describes a Boolean algebra isomorphism (and
its inverse) of
$\ropen(X)$ onto $\ropen(Y)$ arising
from an irreducible map $\pi: Y\rightarrow X$.  In this section, we
recast this result in terms of unital, abelian \cstaralg s.  In this context,  regular
open sets are replaced by regular ideals, continuous surjections are
replaced by $*$-monomorphisms, and essential covers are
replaced by essential extensions.   The collection of regular ideals
of a unital abelian \cstaralg\ is naturally a Boolean algebra.   When 
 $\A$ and $\B$ are unital, abelian \cstaralg s and
$\alpha: \A\rightarrow \B$ is a $*$-monomorphism which is essential in
the sense that for any
non-zero ideal $K\idealin \B$, $\alpha^{-1}(K)$ is non-zero, we shall
describe a Boolean algebra isomorphism between the regular ideals of
$\A$ and the regular ideals of $\B$.   This is accomplished in
Proposition~\ref{ridealISO}.   While  Proposition~\ref{YXiso} and
Proposition~\ref{ridealISO} are  the same result but in
different categories, we wish to recast the 
Boolean algebra isomorphisms of  Proposition~\ref{YXiso} in terms of
algebraic data.

\subsection*{The Boolean
  Algebra of Regular Ideals}

Once again, $X$ is a compact Hausdorff space.

For any $f\in C(X)$, we denote the open support of $f$ by
 \[\supp(f):=\{x\in X: f(x)\neq 0\}.\]  For an ideal
$J\idealin C(X)$, \[\supp(J):=\{x\in X: f(x)\neq 0 \text{ for some }f\in
J\}=\bigcup\{\supp(f): f\in J\}.\]  Then $\supp(J)$ is an open set in $X$. 

For any open set $G\subseteq X$, let \[\ideal(G):=\{f\in C(X):
f|_{X\setminus G}=0\} \simeq C_0(G).\] 

For any set $S\subseteq C(X)$, the \textit{annihilator} of $S$ is the
set,
\[S^\perp :=\{f\in C(X): fg=0 \text{ for every } g\in S\}.\]   Notice
that $S^\perp$ is an ideal of $C(X)$.  We will use $S^\dperp$ to
denote $(S^\perp)^\perp$.   The ideal $J\idealin C(X)$ is
called a \textit{regular ideal} if $J=J^\dperp$.

\begin{lemma} \label{psup} Let $G\subseteq X$ be open, and let 
$J:=\ideal(G)$.  Then
\begin{align} J^\perp&=\ideal( (X\setminus
  G)^\circ)=\ideal(G^\perp)=\ideal(\neg G) \dstext{and}\label{psup1}\\ 
J^\perp{}^\perp&=\ideal((\overline{G})^\circ)  = \ideal(G^\dperp).\label{psup2}
\end{align}
\end{lemma}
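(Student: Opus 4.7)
The first observation is that the three right-hand expressions in \eqref{psup1} — namely $\ideal((X\setminus G)^\circ)$, $\ideal(G^\perp)$, $\ideal(\neg G)$ — denote the same ideal: by the definition given just before the lemma, $G^\perp = X\setminus \overline G = (X\setminus G)^\circ$, and in the Boolean algebra $\ropen(X)$ the operation $\neg$ agrees with $^\perp$ by Fact~\ref{ropfacts}(b) and~\eqref{ropenops}. Applying $^\perp$ once more gives $G^\dperp = (\overline G)^\circ$, so the two expressions on the right of \eqref{psup2} likewise coincide. Hence the real content is a single equality, $J^\perp = \ideal(G^\perp)$, from which \eqref{psup2} will follow by iteration.

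For the containment $\ideal(G^\perp)\subseteq J^\perp$ I would argue pointwise: any $f\in \ideal(G^\perp)$ vanishes off $G^\perp$, i.e.\ on $\overline G$, while any $g\in J=\ideal(G)$ vanishes off $G$; since $G\cap G^\perp = G\cap (X\setminus\overline G)=\emptyset$, the product $fg$ is identically zero. For the reverse containment $J^\perp\subseteq \ideal(G^\perp)$ I would argue by contraposition. Suppose $f\in C(X)$ fails to vanish at some $x_0\in X\setminus G^\perp = \overline G$. By continuity $f$ is nonzero on an open neighborhood $U$ of $x_0$, and because $x_0\in\overline G$ the open set $U\cap G$ is nonempty. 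Urysohn's lemma, applied in the normal space $X$, then produces $g\in C(X)$ supported in $U\cap G$ with $g\neq 0$ at some point of $U$; this $g$ lies in $\ideal(G)=J$ while $fg\neq 0$, showing $f\notin J^\perp$.

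Equation \eqref{psup2} is then a formal consequence: since $G^\perp$ is open, \eqref{psup1} (now proved) applies with $G$ replaced by $G^\perp$ and with $J$ replaced by $\ideal(G^\perp)=J^\perp$, yielding
\[
J^\dperp \;=\; (J^\perp)^\perp \;=\; \ideal\bigl((G^\perp)^\perp\bigr) \;=\; \ideal(G^\dperp) \;=\; \ideal\bigl((\overline G)^\circ\bigr).
\]
The only genuinely analytic ingredient in the whole argument is the Urysohn bump function used to witness $fg\neq 0$; in a compact Hausdorff (hence normal) space this is entirely routine, and I do not anticipate a serious obstacle.
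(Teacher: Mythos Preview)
Your proposal is correct and follows essentially the same approach as the paper. The only cosmetic difference is that, for the inclusion $J^\perp\subseteq\ideal(G^\perp)$, the paper argues directly---given $h\in J^\perp$ and $x\in G$, pick $f\in J$ with $f(x)\neq 0$ to conclude $h(x)=0$---whereas you argue by contraposition from a point $x_0\in\overline G$; both hinge on the same Urysohn bump, and your iteration to obtain \eqref{psup2} matches the paper's.
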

\begin{proof}
  Suppose $h\in C(X)$ and $hf=0$ for every $f\in J$.  Given $x\in G$,
  choose $f\in J$ with $f(x)\neq 0$.  Then $h(x)=0$.  This shows that
  $h|_G =0$, so that $\supp(h)\subseteq (X\setminus G)^\circ.$
  Conversely, if $h\in C(X)$ satisfies
  $\supp(h)\subseteq (X\setminus G)^\circ$, then $hJ=0$, so
  $h\in J^\perp$.  Thus, \eqref{psup1} holds.

The equalities in~\eqref{psup2}  follow from~\eqref{psup1} and the fact that 
$\overline{G}= X\setminus (X\setminus G)^\circ$.
\end{proof}

\begin{corollary} \label{supportSet}
The ideal $J\idealin C(X)$ is regular if and only if
  $\supp(J)$ is a regular open set.
\end{corollary}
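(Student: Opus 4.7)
My plan is to combine Lemma~\ref{psup}\eqref{psup2} with the standard bijection between closed ideals of $C(X)$ and open subsets of $X$. The first observation I would make is that a regular ideal $J = J^\dperp$ is automatically norm-closed, since annihilators in $C(X)$ are always closed. Consequently, for both directions of the asserted equivalence I may assume $J$ is a closed ideal, so that $J = \ideal(G)$ where $G := \supp(J)$.

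With this reduction, Lemma~\ref{psup}\eqref{psup2} immediately yields
\[J^\dperp \;=\; \ideal(G^\dperp).\]
Because the map $G \mapsto \ideal(G)$ is injective on open subsets of $X$, the identity $J = J^\dperp$ is equivalent to $\ideal(G) = \ideal(G^\dperp)$, which is in turn equivalent to $G = G^\dperp$, that is, to $\supp(J)$ being a regular open set. This handles both directions in one stroke.

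There is really no obstacle in this argument: the substance is packaged inside Lemma~\ref{psup}, and the rest is the Gelfand dictionary between closed ideals of $C(X)$ and open subsets of $X$. The only subtlety worth flagging in the write-up is that, without the closedness remark, one could imagine a non-closed ideal $J$ whose support happens to be regular open; this is ruled out immediately by the fact that $J^\dperp$ is closed, so that $J = J^\dperp$ forces $J$ to be closed and hence to coincide with $\ideal(\supp(J))$.
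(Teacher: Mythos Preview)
Your argument is correct and is exactly the paper's approach: write $J=\ideal(G)$ with $G=\supp(J)$, invoke Lemma~\ref{psup}\eqref{psup2} to get $J^\dperp=\ideal(G^\dperp)$, and conclude via the bijection between closed ideals and open sets. The paper's proof is the one-line version of this.

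One small wrinkle in your closedness discussion: the observation ``regular $\Rightarrow$ closed'' only licenses the reduction to closed $J$ for the \emph{forward} direction, not for both as you claim. If the corollary were read for arbitrary (not necessarily closed) ideals, the backward direction would actually fail---for instance, in $C([0,1])$ the ideal $J=\{f: f|_{[1/2,1]}=0 \text{ and } \lim_{x\to 1/2^-} f(x)/(1/2-x)=0\}$ is non-closed with $\supp(J)=[0,1/2)\in\ropen([0,1])$, yet $J\neq J^\dperp$. The corollary (and the paper's proof) is intended for closed ideals; under that reading your closedness remarks are unnecessary and the argument is complete.
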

\begin{proof} $J^\perp{}^\perp =J$ if and only if $(\overline G)^\circ=
   G^\dperp =G$.
\end{proof}

Now let $\rideal(C(X))$ denote the collection of all regular ideals in
$C(X)$.  For $J,  J_1$ and $J_2$ in $\rideal(C(X))$, define
\begin{equation}\label{ridealops}\neg J:=J^\perp,\quad  J_1\vee J_2:= (J_1\cup J_2)^\dperp, \dstext{and}
  J_1\wedge J_2:=J_1\cap J_2.
\end{equation}
Let 
\begin{equation}\label{suppideal}
\supp: \rideal(C(X))\rightarrow \ropen(X) \dstext{and} \ideal:
\ropen(X)\rightarrow \rideal(C(X))
\end{equation}
denote the maps 
$\rideal(C(X))\ni J\mapsto \supp(J)$ and $\ropen(X)\ni G\mapsto
\ideal(G)$ respectively.

The following fact is surely known, but we do not know a reference.
\begin{lemma}[Folklore]\label{ridisorop}
The map $\supp$ is a bijection of $\rideal(C(X))$
onto $\ropen(X)$ which preserves the operations found in
\eqref{ridealops} and \eqref{ropenops};  further, $(\supp)^{-1}=\ideal$.   In particular,
$\rideal(C(X))$ is a Boolean algebra isomorphic under $\supp$ to
$\ropen(X)$.
\end{lemma}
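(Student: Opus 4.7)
The plan is to verify three things in sequence: (i) the maps $\supp$ and $\ideal$ of \eqref{suppideal} are well-defined, (ii) they are mutually inverse, and (iii) $\supp$ intertwines the operations of \eqref{ridealops} with those of \eqref{ropenops}. Well-definedness is already encoded in the preceding results: Corollary~\ref{supportSet} gives $\supp(J)\in\ropen(X)$ for $J\in\rideal(C(X))$, while Lemma~\ref{psup} applied to $G\in\ropen(X)$ yields $\ideal(G)^{\dperp}=\ideal(G^{\dperp})=\ideal(G)$, so $\ideal(G)\in\rideal(C(X))$.

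For the bijection, I would first show $\supp(\ideal(G))=G$ for each $G\in\ropen(X)$. The containment $\supp(\ideal(G))\subseteq G$ is immediate from the definition, and the reverse uses Urysohn's lemma: for $x\in G$, pick $f\in C(X)$ with $f(x)=1$ and $\supp(f)\subseteq G$, so $f\in \ideal(G)$ and $x\in\supp(\ideal(G))$. For the converse identity $\ideal(\supp(J))=J$, I exploit that any regular ideal $J=J^{\dperp}$ is automatically norm-closed (being the annihilator of a set). Since closed ideals of $C(X)$ are in bijection with open sets via $G\leftrightarrow \ideal(G)$, and $J$ clearly sits inside $\ideal(\supp(J))$, closedness of $J$ forces equality. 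This identifies $\ideal=\supp^{-1}$.

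For the Boolean operations, the negation relation $\supp(\neg J)=\supp(J^\perp)=\neg\supp(J)$ is a direct translation of Lemma~\ref{psup}\eqref{psup1} after writing $J=\ideal(\supp(J))$. For the meet, $\supp(J_1\cap J_2)\subseteq \supp(J_1)\cap\supp(J_2)$ is clear, and the reverse containment follows by noting that if $f_i\in J_i$ are both nonzero at $x$, then $f_1f_2\in J_1\cap J_2$ witnesses $x\in\supp(J_1\cap J_2)$; this also incidentally shows $J_1\cap J_2$ is regular (the intersection of regular ideals is regular since $(J_1\cap J_2)^{\dperp}\subseteq J_i^{\dperp}=J_i$). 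For the join I would proceed via De Morgan: the identity $(J_1\cup J_2)^\perp=J_1^\perp\cap J_2^\perp$ gives
\[
\supp(J_1\vee J_2)=\supp\bigl((J_1^\perp\cap J_2^\perp)^\perp\bigr)=\neg\bigl(\neg\supp(J_1)\wedge\neg\supp(J_2)\bigr)=\supp(J_1)\vee\supp(J_2),
\]
using the operation-preserving facts for $\neg$ and $\wedge$ already established together with De Morgan in the Boolean algebra $\ropen(X)$ (Fact~\ref{ropfacts}).

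The only step requiring genuine care is showing $\ideal\circ\supp=\id_{\rideal(C(X))}$; everything else reduces cleanly to the annihilator identities of Lemma~\ref{psup} and pointwise multiplication arguments. This step in turn hinges on the observation, not stated explicitly in the paper but immediate from the definition, that every regular ideal is norm-closed, so that the classical correspondence between closed ideals of $C(X)$ and open subsets of $X$ can be applied.
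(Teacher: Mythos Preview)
Your proof is correct and follows essentially the same outline as the paper's, but with more detail: the paper simply invokes Corollary~\ref{supportSet} for bijectivity and calls the meet case ``routine,'' whereas you spell out the Urysohn argument for $\supp\circ\ideal=\id$ and the closed-ideal correspondence for $\ideal\circ\supp=\id$ (correctly observing that regular ideals are norm-closed as annihilators). The one genuine methodological difference is the join: the paper computes $\supp(J_1\vee J_2)$ directly by noting that the closed linear span of $J_1\cup J_2$ has support $G_1\cup G_2$ and then applying Lemma~\ref{psup} to obtain $(\overline{G_1\cup G_2})^\circ$, while you derive it from De~Morgan using the already-established compatibility with $\neg$ and $\wedge$. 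Your route is slightly slicker in that it avoids a separate computation, at the cost of needing the auxiliary observation that intersections of regular ideals are regular; the paper's route is more self-contained but requires identifying the support of the generated closed ideal. Both are equally valid for this folklore lemma.
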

\begin{proof}  Corollary~\ref{supportSet} shows $\supp$ is bijective
  and $(\supp)^{-1}=\ideal$.
  That meets are preserved is routine.  To check joins, for $j=1,2$, let
  $J_i$ be regular ideals and let $G_i=\supp(J_i)$.  Note that the
  closed linear span of $J_1\cup J_2$ is the ideal whose support is
  $G_1\cup G_2$, so the support of $J_1\vee J_2$ is
  $(\overline{G_1\cup G_2})^\circ$.
  Finally, for $J\in \rideal(C(X))$, Lemma~\ref{psup} gives
  $\supp(J^\perp)=\supp(J)^\perp$.
\end{proof}

\subsection*{Essential Extensions and Isomorphisms of the Lattice of
  Regular Ideals}

Let $\D$ be a unital and abelian \cstaralg.  An \textit{extension of
  $\D$} is defined to be a pair $(\D_1,\alpha)$ consisting of an
abelian unital \cstaralg\ $\D_1$ and a $*$-monomorphism
$\alpha:\D\rightarrow \D_1$.  The extension $(\D_1,\alpha)$ is called
\textit{essential} if the following property holds: whenever
$\theta:\D_1\rightarrow\bh$ is a $*$-representation of $\D_1$ such
that $\theta\circ\alpha$ is a faithful representation of $\D$, then
$\theta$ is faithful.

\begin{remark}{Remark} Notice that $(\D_1,\alpha)$ is an essential
  extension of $\D$ if and only if it has the following property:
  whenever $J\idealin \D_1$ is a non-zero ideal, $\alpha^{-1}(J)$ is a
  non-zero ideal of $\D$.
\end{remark}

Covers for $X$ and extensions for $C(X)$ are in bijective
correspondence, a fact which we now discuss a little more.
 If $(Y,\pi)$ is a cover for $X$ and
$(C(Y),\alpha)$ is an extension for $C(X)$, then $(Y, \pi)$
and  $(C(Y),\alpha)$ are called \textit{dual} if 
\[f\circ\pi=\alpha(f)\dstext{for every} f\in C(X).\]  When this
occurs we will say $(Y,\pi)$ (resp.\ $(C(Y), \alpha)$) is
\textit{dual} to $(C(Y),\alpha)$ (resp. $(Y,\pi)$) and will write
\[\dual{(Y,\pi)}=(C(Y),\alpha)\dstext{and}\dual{(C(Y),\alpha)}=(Y,\pi).\]

For every cover $(Y,\pi)$ of $X$ there is a unique dual extension $(C(Y),\alpha)$:
 take
$\alpha: C(X)\rightarrow C(Y)$ to be the map $C(X)\ni f\mapsto f\circ\pi$.
Likewise, if
$(C(Y),\alpha)$ is a extension of $C(X)$, then there is a unique dual
cover $(Y,\pi)$ for $X$.   Indeed, for every $y\in Y$,
$f\mapsto \alpha(f)(y)$ is a multiplicative linear functional on
$C(X)$; this gives  a unique element $\pi(y)\in X$.  Then  $(Y,\pi)$
is the unique 
cover for $X$ which satisfies $f\circ \pi=\alpha(f)$ for every $f\in C(X)$.

As expected, the dual of an essential cover (resp.\
extension) is an essential extension (resp.\ cover). 
\begin{lemma}\label{ecoveex}  Suppose $(Y,\pi)$ is  cover for $X$.
  Then $(Y,\pi)$ is an essential cover if and only if
  $(C(Y), \alpha):=\dual{(Y,\pi)}$ is an essential extension of
  $C(X)$.
\end{lemma}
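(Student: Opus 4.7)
The proof plan is to translate between the topological notion of irreducibility of $\pi$ and the algebraic notion of essentiality of $\alpha$ via the standard dictionary between closed ideals of $C(Y)$ and open subsets of $Y$: a closed ideal $J \idealin C(Y)$ has open support $\supp(J) \subseteq Y$, and conversely every open $G\subseteq Y$ gives a closed ideal $\ideal(G)$, with $J = \ideal(\supp(J))$. (For a general, possibly non-closed non-zero ideal, one passes to its norm closure, which is a non-zero closed ideal; since $\alpha^{-1}$ of an ideal is contained in $\alpha^{-1}$ of its closure, it suffices to prove the preimage-nonzero property for closed ideals.) With this in hand, the lemma reduces to two short topological arguments.

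For the direction that essential cover implies essential extension, I would take a nonzero closed ideal $J \idealin C(Y)$, set $G := \supp(J)$, and consider the proper closed set $F := Y \setminus G \subsetneq Y$. Irreducibility of $\pi$ gives $\pi(F) \neq X$, so by Urysohn's lemma (or Fact~\ref{ropfacts}(a)) there exists a nonzero $f \in C(X)$ with $f|_{\pi(F)} = 0$. Then $\alpha(f) = f\circ\pi$ vanishes on $F$, so $\supp(\alpha(f)) \subseteq G$, which puts $\alpha(f)\in \ideal(G) = J$. Hence $f \in \alpha^{-1}(J)$ and $\alpha^{-1}(J) \neq 0$.

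For the converse, I would argue contrapositively: suppose $\pi$ is not irreducible, so there is a proper closed $F \subsetneq Y$ with $\pi(F) = X$. Let $G := Y \setminus F$, which is nonempty and open, and set $J := \ideal(G)$, a nonzero closed ideal of $C(Y)$. Any $f \in \alpha^{-1}(J)$ satisfies $(f\circ\pi)|_F = 0$, i.e.\ $f$ vanishes on $\pi(F) = X$, forcing $f = 0$. Thus $\alpha^{-1}(J) = 0$, so $(C(Y),\alpha)$ is not essential.

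The arguments are almost entirely routine once the ideal/open-set correspondence is in place; there is no genuine obstacle. The only minor point requiring care is the potential discrepancy between algebraic and closed ideals in the definition of essentiality, which I would resolve at the outset by noting that the norm closure of a nonzero ideal is a nonzero closed ideal and $\alpha^{-1}$ respects closures in the relevant sense.
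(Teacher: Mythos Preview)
Your proof is correct and follows essentially the same approach as the paper: both arguments rest on the correspondence between closed ideals of $C(Y)$ and open subsets of $Y$, together with Urysohn's lemma to produce nonzero functions vanishing on a given proper closed set. The only cosmetic difference is that you and the paper swap which implication is proved directly and which contrapositively; the underlying logic is identical.
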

\begin{proof}
Suppose $(C(Y),\alpha)$ is an essential extension for $C(X)$ and let 
$Y_0$ be a closed subset of $Y$ such that $\pi(Y_0)=X$.  If $f\in
C(X)$ and $\alpha(f)$ vanishes on $Y_0$, then $f$ vanishes on
$\pi(Y_0)$, that is, $f=0$.   This means that
$\alpha^{-1}(\ideal(Y\setminus Y_0))=\{0\}$.  Since $(C(Y),\alpha)$ is
an essential extension,
$\ideal(Y\setminus Y_0)=\{0\}$, that is, $Y\setminus Y_0=\emptyset$.
Thus $(Y, \pi)$ is an essential cover for $X$.

Conversely, suppose $(Y,\pi)$ is an essential cover for $X$ and let
$J\idealin C(Y)$ be such that $\alpha^{-1}(J)=\{0\}$.    Put
$Y_0:=Y\setminus\supp(J)$.  If $f\in C(X)$ and $\alpha(f)$ vanishes on
$Y_0$, then $\alpha(f)\in J$, whence $f=0$.  It follows that $(Y_0,\pi|_{Y_0})$
is a cover for $X$, for if not, we may choose a non-zero $f\in C(X)$
vanishing on $\pi(Y_0)$.   By assumption, this gives $Y_0=Y$, that is, $J=\{0\}$.  Thus
$(C(Y),\alpha)$ is an essential extension of $C(X)$.
\end{proof}

Now suppose $(Y,\pi)$ is an essential cover for $X$, with dual
extension $(C(Y),\alpha)$. 
For $E\in \{X,Y\}$, let $\supp_E$ and $\ideal_E$ be
  the isomorphisms of $\rideal(C(E))$ onto $\ropen(E)$ and $\ropen(E)$
  onto $\rideal(C(E))$ described in Lemma~\ref{ridisorop}.  Also, let
  $\Phi$ and $\Psi$ be the isomorphisms described in
  Proposition~\ref{YXiso}.
  Then the Boolean algebra isomorphisms
  \begin{align}
   \Upsilon:= (\ideal_Y)\circ\Phi\circ(\supp_X):
    \rideal(C(X))\rightarrow\rideal(C(X)) \label{si1}\\\intertext{and}
   \Omega:= (\ideal_X)\circ\Psi\circ(\supp_Y):\rideal(C(Y))\rightarrow\rideal(C(X))\label{si2}
  \end{align} satisfy $\Upsilon^{-1}=\Omega$.  The remainder of this
  section is devoted to finding formulae for $\Upsilon$ and $\Omega$.
  This is accomplished in Proposition~\ref{ridealISO}.

\begin{lemma}\label{ropen->rideal}  Suppose $(Y,\pi)$ is an essential
  cover for $X$, let $(C(Y),\alpha):=\dual{(Y,\pi)}$, and let
  $\Psi:\ropen(Y)\rightarrow\ropen(X)$ and
  $\Phi:\ropen(X)\rightarrow\ropen(Y)$ be the Boolean algebra
  isomorphisms described in Proposition~\ref{YXiso}.
    The following statements hold.
\begin{enumerate}
  \item\label{ro->ri1}  
  For $J\in \rideal(C(X))$, $\supp(\alpha(J)^\dperp) = \Phi(\supp(J))$.
\item \label{ro->ri2}  For $K\in \rideal(C(Y))$,
  $\supp(\alpha^{-1}(K))=\Psi(\supp(K))$.
\end{enumerate}
\end{lemma}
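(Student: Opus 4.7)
For part~(a), write $V:=\supp(J)$. Since $\pi$ is surjective, each $f\in J$ satisfies $\supp(f\circ\pi)=\pi^{-1}(\supp f)$, so the union of the supports of the elements of $\alpha(J)$ is $\pi^{-1}(V)$. Consequently the closed ideal of $C(Y)$ generated by $\alpha(J)$ equals $\ideal_Y(\pi^{-1}(V))$, and because the annihilator of a subset coincides with that of the ideal it generates, Lemma~\ref{psup} gives
\[\alpha(J)^\dperp=\ideal_Y(\pi^{-1}(V))^\dperp=\ideal_Y\!\left((\pi^{-1}(V))^\dperp\right).\]
Taking supports and applying~\eqref{YXiso2} yields
\[\supp(\alpha(J)^\dperp)=(\pi^{-1}(V))^\dperp=\intr(\cl(\pi^{-1}(V)))=\Phi(V),\]
which is~(a).

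For part~(b), let $U:=\supp(K)$, so that $K=\ideal_Y(U)$ by Lemma~\ref{ridisorop}. Unpacking definitions,
\[\alpha^{-1}(K)=\{f\in C(X):(f\circ\pi)|_{Y\setminus U}=0\}=\{f\in C(X):f|_{\pi(Y\setminus U)}=0\}.\]
Because $Y\setminus U$ is compact, $\pi(Y\setminus U)$ is closed in $X$, and therefore $\alpha^{-1}(K)=\ideal_X(X\setminus\pi(Y\setminus U))$. The task thus reduces to verifying
\[X\setminus\pi(Y\setminus U)=\intr(\pi(\cl U)),\]
the right-hand side being $\Psi(U)$ by~\eqref{YXiso1}. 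One inclusion is immediate: $X=\pi(Y)=\pi(Y\setminus U)\cup\pi(\cl U)$ gives $X\setminus\pi(Y\setminus U)\subseteq\pi(\cl U)$, and this set is open because $\pi(Y\setminus U)$ is closed.

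The reverse inclusion is the crux of the proof and the step where irreducibility is essential. Setting $W:=\intr(\pi(\cl U))$, the plan is to prove $\pi^{-1}(W)\subseteq\cl U$; once established, $\pi^{-1}(W)$ is then an open subset of $\cl U$, so by regularity of $U$ it is contained in $\intr(\cl U)=U$, which forces $W\cap\pi(Y\setminus U)=\emptyset$, as required. To prove the claim, suppose for contradiction that some $y\in\pi^{-1}(W)\setminus\cl U$ exists. Then $O:=\pi^{-1}(W)\cap(Y\setminus\cl U)$ is a nonempty open subset of $Y$, so $Y\setminus O$ is a proper closed subset, and irreducibility yields $x_1\in X$ with $\pi^{-1}(x_1)\subseteq O\subseteq Y\setminus\cl U$. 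On the other hand $x_1\in\pi(O)\subseteq W\subseteq\pi(\cl U)$, so some preimage of $x_1$ must lie in $\cl U$, contradicting $\pi^{-1}(x_1)\subseteq Y\setminus\cl U$. This establishes the inclusion and completes~(b).
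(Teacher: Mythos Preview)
Your proof is correct. Part~(a) is essentially the paper's argument, only streamlined: the paper explicitly computes $\alpha(J)^\perp$ as the set of functions vanishing on $\pi^{-1}(V)$ and then negates, whereas you pass directly to $\alpha(J)^\dperp$ via Lemma~\ref{psup}\eqref{psup2}.

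For part~(b) you take a genuinely different route. The paper proves the two inclusions $\supp(\alpha^{-1}(K))\subseteq\Psi(U)$ and $\Psi(U)\subseteq\supp(\alpha^{-1}(K))$ separately; for the second it shows $\ideal(\Psi(U))\subseteq\alpha^{-1}(K)$ by invoking the identity $\Phi(\Psi(U))=U$ from Proposition~\ref{YXiso}, so irreducibility enters only through that earlier result. You instead identify $\alpha^{-1}(K)$ explicitly as $\ideal_X(X\setminus\pi(Y\setminus U))$ and then establish the purely topological identity $X\setminus\pi(Y\setminus U)=\intr(\pi(\cl U))$, appealing to irreducibility directly for the hard inclusion. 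Your argument is thus more self-contained---it does not rely on $\Phi\circ\Psi=\id$---and yields as a byproduct the clean formula $\supp(\alpha^{-1}(K))=X\setminus\pi(Y\setminus\supp K)$. The paper's route, on the other hand, avoids redoing the irreducibility argument by cashing in the Boolean isomorphism already in hand.
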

\begin{proof}
 \eqref{ro->ri1} Let $J\in \rideal(C(X))$ and let $V=\supp(J)\in
 \ropen(X)$.  Since
  $\alpha(J)=\{f\circ\pi: f\in J\}$,
  \begin{equation}\label{ro->ri2.5}
    \alpha(J)^\perp=\{g\in C(Y):g(y) f(\pi(y))=0\text{ for all $f\in
    J$ and $y\in Y$}\}.
\end{equation}
Suppose $g\in \alpha(J)^\perp$.  Let us show  $g$ vanishes on
$\pi^{-1}(V)$.  Given $y\in
\pi^{-1}(V)$, we may choose $f\in J$
  so that $f(\pi(y))=1$.  Then $g(y)=g(y)f(\pi(y))=0$.  We conclude
  that  every
  function in  $\alpha(J)^\perp$ vanishes on $\pi^{-1}(V)$, as
  claimed.
  
On the
  other hand, if $g\in C(Y)$ vanishes on $\pi^{-1}(V)$, then $g\in
  \alpha(J)^\perp$.  Thus~\eqref{ro->ri2.5} gives, 
  \[\alpha(J)^\perp=\{g\in C(Y): g|_{\pi^{-1}(V)}=0\}.\]   It follows
  that
  \begin{equation}\label{ro->ri3}
    \supp(\alpha(J)^\perp) = Y\setminus \cl(\pi^{-1}(V)).
  \end{equation}
  Since $\alpha(J)^\perp$ is a regular ideal, its support set belongs to
$\ropen(Y)$ by Corollary~\ref{supportSet}.  Hence $\cl(\pi^{-1}(V))=Y\setminus\supp(\alpha(J)^\perp)$ is a regular closed set.
Thus 
\begin{equation}\label{ro->ri4} Y\setminus \cl(\pi^{-1}(V))=Y\setminus
  \overline{\Phi(V)}=\neg\Phi(V).
\end{equation}
Combining~\eqref{ro->ri3} and~\eqref{ro->ri4} with Lemma~\ref{ridisorop} yields,
  \[\alpha(J)^\perp= \ideal(\neg\Phi(V)) =\neg\ideal(\Phi(V)).\]
  Therefore,
  $\alpha(J)^\dperp= \neg\neg
  \ideal(\Phi(V))=\ideal(\Phi(V))=\ideal(\Phi(\supp(J)))$.  Another
  application of Lemma~\ref{ridisorop} gives \eqref{ro->ri1}.

  \eqref{ro->ri2} Let $K\in \rideal(C(Y))$, set \[U=\supp(K), \dstext{and}
  S=\supp(\alpha^{-1}(K)).\]  For $x\in S$, find $f\in \alpha^{-1}(K)$
  such that $f(x)=1$.  For any $y\in Y$ with $\pi(y)=x$ we have
  $\alpha(f)(y)=f(\pi(y))=1$.  Recalling $\alpha(f)\in K$, we see $y\in U$.  Hence
  $x=\pi(y)\in \pi(U)\subseteq \pi(\cl (U))$.  As $S$ is open, we find
  $S\subseteq \intr(\pi(\cl(U)))=\Psi(U)$, that is,
\[\supp(\alpha^{-1}(K))\subseteq \Psi(\supp(K)).\]

For the reverse inclusion, we first show 
\begin{equation}\label{ro->ri5} \ideal(\Psi(U))\subseteq
  \alpha^{-1}(K).
\end{equation}
Suppose  
$f\in \ideal(\Psi(U))$.
If $y\in Y$ and
$f(\pi(y))\neq 0$, then 
$\pi(y)\in \Psi(U)$. 
Therefore,
\[\supp(f\circ\pi)\subseteq \pi^{-1}(\Psi(U))\subseteq
  \intr(\cl(\pi^{-1}(\Psi(U))))=\Phi(\Psi(U))=U.\] 
This shows that
$\alpha(f)\in K$, so $f\in \alpha^{-1}(K)$, establishing~\eqref{ro->ri5}.
Upon applying the map $\supp$ to each side of~\eqref{ro->ri5},
Lemma~\ref{ridisorop} shows that
\[\Psi(\supp(K))\subseteq \supp(\alpha^{-1}(K)).\]  This completes
the proof.
\end{proof} 
\begin{proposition}  \label{ridealISO}
Suppose 
$(C(Y),\alpha)$ is an essential extension of $C(X)$.   
Then the Boolean algebra isomorphisms  $\Upsilon: \rideal(C(X))\rightarrow\rideal(C(Y))$ of~\eqref{si1} and
  $\Omega:\rideal(C(Y))\rightarrow \rideal(C(X))$ of~\eqref{si2} are
  given by \begin{align}
    \Upsilon(J)&=\alpha(J)^\dperp \qquad J\in\rideal(C(X))
\label{ridealISO1}\\
             \intertext{and}
\Omega(K)&=\alpha^{-1}(K)\qquad K\in\rideal(C(Y)).\label{ridealISO2}
           \end{align}
           \end{proposition}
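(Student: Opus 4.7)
The plan is to show that both identities follow almost immediately from Lemma~\ref{ropen->rideal} together with the fact, recorded in Lemma~\ref{ridisorop}, that $\supp$ and $\ideal$ are mutually inverse isomorphisms between $\rideal(C(\cdot))$ and $\ropen(\cdot)$. The heavy lifting — translating the topological isomorphisms $\Phi$ and $\Psi$ of Proposition~\ref{YXiso} into statements about the algebraic operations $J\mapsto\alpha(J)^\dperp$ and $K\mapsto\alpha^{-1}(K)$ — has already been carried out in Lemma~\ref{ropen->rideal}.

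For \eqref{ridealISO1}, fix $J\in\rideal(C(X))$. A standard Galois-connection argument shows that for any subset $S\subseteq C(Y)$, one has $S^{\perp\perp\perp}=S^\perp$, so $\alpha(J)^\dperp$ is automatically a regular ideal of $C(Y)$. Hence $\ideal_Y\circ\supp_Y$ fixes $\alpha(J)^\dperp$, and using the definition \eqref{si1} of $\Upsilon$ and Lemma~\ref{ropen->rideal}\eqref{ro->ri1} we compute
\begin{equation*}
\Upsilon(J)=\ideal_Y(\Phi(\supp_X(J)))=\ideal_Y(\supp_Y(\alpha(J)^\dperp))=\alpha(J)^\dperp.
\end{equation*}

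For \eqref{ridealISO2}, fix $K\in\rideal(C(Y))$. Here I first need to know that $\alpha^{-1}(K)$ is itself a regular ideal, which is not a priori obvious. By Lemma~\ref{ropen->rideal}\eqref{ro->ri2} we have $\supp(\alpha^{-1}(K))=\Psi(\supp(K))$, and the right-hand side lies in $\ropen(X)$; Corollary~\ref{supportSet} then forces $\alpha^{-1}(K)$ to be regular. With this in hand, $\ideal_X\circ\supp_X$ fixes $\alpha^{-1}(K)$, and from the definition \eqref{si2} of $\Omega$ together with Lemma~\ref{ropen->rideal}\eqref{ro->ri2} we obtain
\begin{equation*}
\Omega(K)=\ideal_X(\Psi(\supp_Y(K)))=\ideal_X(\supp_X(\alpha^{-1}(K)))=\alpha^{-1}(K).
\end{equation*}

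The main conceptual obstacle, namely identifying how $\Phi$ and $\Psi$ interact with the algebraic maps $J\mapsto \alpha(J)^\dperp$ and $K\mapsto \alpha^{-1}(K)$, was already absorbed into the proof of Lemma~\ref{ropen->rideal}; only the small verification that $\alpha^{-1}(K)$ is regular for regular $K$ remains, and it follows at once from Corollary~\ref{supportSet}. So the proof is essentially a two-line calculation in each direction.
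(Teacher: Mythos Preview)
Your proof is correct and follows essentially the same route as the paper: both arguments amount to applying $\ideal$ to the two identities of Lemma~\ref{ropen->rideal} and invoking Lemma~\ref{ridisorop}. You are simply more explicit than the paper about why $\alpha(J)^\dperp$ and $\alpha^{-1}(K)$ are regular (the paper leaves this implicit, since $\Upsilon$ and $\Omega$ are already known to land in $\rideal$); note incidentally that for the second identity you do not strictly need regularity of $\alpha^{-1}(K)$ to conclude $\ideal_X(\supp_X(\alpha^{-1}(K)))=\alpha^{-1}(K)$, as $\ideal\circ\supp$ fixes every closed ideal.
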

\begin{proof} Let $(Y,\pi)$ be the essential cover of $X$ dual to
  $(C(Y),\alpha)$.  
  By Lemma~\ref{ropen->rideal}, for $J\in \rideal(C(X))$ and $K\in\rideal(C(Y))$
  \[\alpha(J)^\dperp=\ideal_Y(\Phi(\supp_X(J)) \dstext{and}
    \alpha^{-1}(K)=\ideal_X(\Psi(\supp_Y(K))).\] 
\end{proof}

\section{Boolean Equivalence}\label{SecBE}

\begin{definition}  Call two compact Hausdorff spaces $X$ and $Y$
  \textit{Boolean equivalent} if there is a Boolean algebra isomorphism
  between the Boolean algebras, $\ropen(X)$ and $\ropen(Y)$. 
Similarly, we say that the unital, abelian \cstaralg s $\A$ and $\B$
are \textit{Boolean equivalent} if $\rideal(\A)$ and $\rideal(B)$ are
isomorphic Boolean algebras. 

 We will sometimes  use the notation $X\equiv_B Y$ to indicate that
  $X$ is Boolean equivalent to $Y$.
  \end{definition}

\begin{proposition}\label{BE} Let $X$ and $Y$ be compact Hausdorff spaces with
  projective covers $(P_X,\pi_X)$ and $(P_Y,\pi_Y)$ respectively.
  The following statements are equivalent.
  \begin{enumerate}
    \item The spaces $X$ and $Y$ are
      Boolean equivalent.
    \item The spaces $P_X$ and $P_Y$ are homeomorphic.
    \item  There exists a compact Hausdorff space $Z$ and irreducible
      maps $f: Z\rightarrow X$ and $g: Z\rightarrow Y$.
    \end{enumerate}
  \end{proposition}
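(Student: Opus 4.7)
The plan is to prove the cyclic implications (a) $\Rightarrow$ (b) $\Rightarrow$ (c) $\Rightarrow$ (a), or rather (a) $\Leftrightarrow$ (b) and then (b) $\Leftrightarrow$ (c).

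For (a) $\Leftrightarrow$ (b), I would apply Proposition~\ref{fineform} to both projective covers to get Boolean algebra isomorphisms $\ropen(X) \cong \ropen(P_X)$ and $\ropen(Y) \cong \ropen(P_Y)$. Since $P_X$ and $P_Y$ are Stonean (hence totally disconnected), $\ropen(P_X)$ and $\ropen(P_Y)$ coincide with the Boolean algebras of clopen subsets of $P_X$ and $P_Y$ respectively. Stone duality then says that two Stonean spaces are homeomorphic if and only if their clopen Boolean algebras are isomorphic. Combining these equivalences gives (a) $\Leftrightarrow$ (b). A homeomorphism $P_X \to P_Y$ in one direction transparently produces a Boolean isomorphism $\ropen(X) \to \ropen(Y)$, and in the other direction one reconstructs $P_X, P_Y$ as the Stone duals of $\ropen(X), \ropen(Y)$ (Standing Assumption~\ref{stas1}).

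For (b) $\Rightarrow$ (c), I would simply take $Z := P_X$, $f := \pi_X$, and $g := \pi_Y \circ \phi$ where $\phi: P_X \to P_Y$ is the given homeomorphism. Both $f$ and $g$ are irreducible because projective covers are essential (rigid) covers, and composing an irreducible map with a homeomorphism preserves irreducibility.

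The core of the argument is (c) $\Rightarrow$ (b). Given a $Z$ with irreducible maps $f: Z \to X$ and $g: Z \to Y$, I would let $(P_Z, \pi_Z)$ be a projective cover of $Z$ and consider $f \circ \pi_Z: P_Z \to X$ and $g \circ \pi_Z: P_Z \to Y$. The first key observation is that the composition of two irreducible continuous surjections between compact Hausdorff spaces is irreducible: if $F \subseteq P_Z$ is closed and $(f \circ \pi_Z)(F) = X$, then $\pi_Z(F)$ is a closed subset of $Z$ (by compactness) whose $f$-image is $X$, forcing $\pi_Z(F) = Z$ by irreducibility of $f$, and then $F = P_Z$ by irreducibility of $\pi_Z$. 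Thus $(P_Z, f\circ \pi_Z)$ is an essential cover by a projective space, hence by~\cite[Proposition~2.13]{HadwinPaulsenInPrAnTo} (cited in Section~\ref{sec:pc}) it is rigid, and therefore a projective cover of $X$. By the uniqueness Fact~\ref{pcu}, $P_Z$ is homeomorphic to $P_X$. The same argument with $g$ shows $P_Z$ is homeomorphic to $P_Y$, whence $P_X \cong P_Y$.

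I do not expect a serious obstacle: the only nonroutine verification is that composition of irreducibles is irreducible, which is a short closed-set chase, and the mild point that Stone duality applied to Stonean spaces lets one pass freely between $\ropen(P_X) \cong \ropen(P_Y)$ and $P_X \cong P_Y$. Everything else is a direct invocation of Proposition~\ref{fineform} and Fact~\ref{pcu}.
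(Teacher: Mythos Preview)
Your proof is correct. The implication (a)$\Leftrightarrow$(b) and (b)$\Rightarrow$(c) match the paper's argument essentially verbatim. The difference is in how you close the cycle: the paper proves (c)$\Rightarrow$(a) directly by applying Proposition~\ref{YXiso} to the two essential covers $(Z,f)$ and $(Z,g)$, obtaining isomorphisms $\ropen(Z)\to\ropen(X)$ and $\ropen(Y)\to\ropen(Z)$ whose composite is the desired Boolean isomorphism. You instead prove (c)$\Rightarrow$(b) by passing to a projective cover $(P_Z,\pi_Z)$ of $Z$, verifying that compositions of irreducible maps are irreducible, and invoking the uniqueness Fact~\ref{pcu} to conclude $P_X\cong P_Z\cong P_Y$. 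Both routes are short and valid; the paper's is slightly more economical since Proposition~\ref{YXiso} already packages the relevant content, whereas your route reproves a piece of that by hand (the composition lemma) but has the virtue of making the uniqueness of projective covers do the work explicitly.
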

\begin{proof}
(a)$\Rightarrow$(b)\  If $X$ and $Y$ are Boolean equivalent,
Proposition~\ref{fineform} implies that $\ropen(P_X)$ and $\ropen(P_Y)$
are isomorphic Boolean algebras.  So $P_X$ is homeomorphic to $P_Y$ by
Stone's theorem.

(b)$\Rightarrow$(c)\  Let $h: P_X\rightarrow P_Y$ be a homeomorphism.
Take $Z=P_X$, $f=\pi_X$ and $g:=\pi_Y\circ h$.

(c)$\Rightarrow$(a)\ 
Apply
Proposition~\ref{YXiso}  to the essential covers $(Z,f)$ and
$(Z,g)$ of $X$ and $Y$ respectively to obtain isomorphisms
$\Psi_{X,Z}: \ropen(Z)\rightarrow\ropen(X)$ and
$\Phi_{Z,Y}:\ropen(Y)\rightarrow \ropen(Z)$.  Then
$\Psi_{X,Z}\circ\Phi_{Z,Y}$ is an isomorphism of $\ropen(Y)$ onto $\ropen(X)$.
\end{proof}

Boolean equivalence is a very weak notion.
We now present some results showing some examples of spaces and
algebras which are
Boolean equivalent.

\begin{flexstate}{Proposition}{}\label{cmtify}  Let $X$ be a locally
  compact, but not compact,
  Hausdorff space  and denote by $X^+$ its one-point compactification.
  Suppose $Y$ is a compactification of $X$, that is, $Y$ is compact
  and $h: X\rightarrow Y$ is an
  embedding with $h(X)$ dense in $Y$.   Then $X^+$ and $Y$ are Boolean
  equivalent.
\end{flexstate}
\begin{proof}
Regard $X$ as a subset of $X^+=X\cup\{\infty\}$.  Since $X$ and $h(X)$
(where $h(X)$ is equipped with the subspace topology)
are homeomorphic, the map
$\pi:Y\rightarrow X^+$ given by \begin{equation}\label{cmtify1} \pi(y)=\begin{cases} x&
  \text{ if } y=h(x)\\
  \infty& \text{ if } y\notin h(X)
\end{cases}
\end{equation}
is a continuous surjection.   
Thus the result will follow from Proposition~\ref{YXiso} once we verify
that $(Y,\pi)$ is an essential cover for $X^+$.    To see this,
suppose $F\subseteq Y$ is closed and $\pi(F)=X$.  Suppose $x\in X$.
By construction, $\pi^{-1}(\{x\})=\{h(x)\}$.  Therefore $h(X)\subseteq
F$, so since $F$ is closed, $F=Y$.  Thus $(Y,\pi)$ is an essential
cover for $X^+$.  
\end{proof}

\begin{corollary}\label{malg}   Suppose $\A$ is an abelian, but
  non-unital, \cstaralg, and denote by  $\tilde\A$ and $M(\A)$ its
  unitization and multiplier algebra respectively. 
  Then  $\tilde\A$ and $M(\A)$ are Boolean equivalent.
\end{corollary}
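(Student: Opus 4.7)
The plan is to reduce the statement to Proposition~\ref{cmtify} via Gelfand duality. Since $\A$ is abelian and non-unital, I would first write $\A\cong C_0(X)$ where $X$ is the Gelfand spectrum, a locally compact but non-compact Hausdorff space. Under this identification, the unitization $\tilde\A$ becomes $C(X^+)$, where $X^+$ is the one-point compactification of $X$, and the multiplier algebra $M(\A)$ becomes $C_b(X)\cong C(\beta X)$, where $\beta X$ is the Stone--\v Cech compactification. Both identifications are standard facts from Gelfand theory.

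Next I would observe that $\beta X$ is a compactification of $X$ in the sense required by Proposition~\ref{cmtify}: the inclusion $h\colon X\hookrightarrow \beta X$ is an embedding with dense image. Proposition~\ref{cmtify} then produces an essential cover $\pi\colon \beta X\to X^+$ and, via Proposition~\ref{YXiso}, a Boolean algebra isomorphism $\ropen(\beta X)\cong \ropen(X^+)$, so $X^+\equiv_B \beta X$.

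Finally, I would invoke Lemma~\ref{ridisorop}, which gives canonical Boolean algebra isomorphisms
\[\rideal(\tilde\A)=\rideal(C(X^+))\cong \ropen(X^+)\quad\text{and}\quad \rideal(M(\A))=\rideal(C(\beta X))\cong \ropen(\beta X).\]
Composing these with the isomorphism obtained in the previous step yields a Boolean algebra isomorphism $\rideal(\tilde\A)\cong \rideal(M(\A))$, i.e., $\tilde\A$ and $M(\A)$ are Boolean equivalent.

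There is no real obstacle here; the only points requiring a brief word are the two Gelfand-theoretic identifications ($\tilde\A\cong C(X^+)$ and $M(\A)\cong C(\beta X)$) and the remark that $X$ being non-compact guarantees the hypotheses of Proposition~\ref{cmtify} are satisfied. Everything else is bookkeeping that chains Proposition~\ref{cmtify} with Lemma~\ref{ridisorop}.
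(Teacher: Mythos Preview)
Your proposal is correct and follows the same route as the paper: identify $\A\cong C_0(X)$ via Gelfand theory, so that $\tilde\A\cong C(X^+)$ and $M(\A)\cong C(\beta X)$, then apply Proposition~\ref{cmtify} and Lemma~\ref{ridisorop}. The paper's proof is simply a more terse version of what you wrote.
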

\begin{proof}
Using the Gelfand theorem, we may assume $A=C_0(X)$ for a suitable  locally compact, but not
compact, Hausdorff space $X$.  Then $C(X^+)=\tilde\A$ and $C(\beta X)=M(\A)$.  Now
  apply Propositions~\ref{cmtify} and Lemma~\ref{ridisorop}.
\end{proof}

\begin{comment}
Since the Stone-\v Cech compactification of a discrete space is a
projective space (see~\cite[Proposition~2.8]{HadwinPaulsenInPrAnTo}),
Proposition~\ref{cmtify} gives the following.
\begin{example} \label{isolated} Let $X$ be a compact Hausdorff space,
               and suppose $\isol(X)$ is dense in $X$.   The
  universal property of the Stone-\v Cech compactification shows that
  the inclusion mapping of $\isol(X)$ into $X$ extends to a surjection $\pi: \beta\,\isol(X)
  \rightarrow X$.  Since $(\beta\,\isol(X), \pi)$ is an essential surjection,  
  $(\beta\,\isol(X),\pi)$ is  a projective cover for $X$.  
\end{example}
\end{comment}

Here is a 
result which implies that any two perfect and compact metric spaces are
Boolean equivalent.

\begin{flexstate}{Proposition}{}\label{CantorCmptPMS}
Let $C$ be the Cantor set and suppose $X$ is a  compact and perfect metric space.  Then $X$
and $C$ are Boolean equivalent.
\end{flexstate}
\begin{proof}
The Hausdorff-Alexandroff theorem yields a continuous surjection $\pi:
C\rightarrow X$.   Let $Y\subseteq C$ be a minimal closed set such
that $\pi(Y)=X$.   Then $Y$ is compact and totally disconnected.  Let us
show $Y$ is perfect.

Suppose $y\in Y$ is isolated and put $Y_0:=Y\setminus \{y\}$.   Then
$Y_0$ is closed, hence compact.   Put $x=\pi(y)$.  Since $X$ is perfect,
we may find a sequence $(x_n)$ of distinct elements of $X$ such that
$0<d(x_n,x)\rightarrow 0$.  Let $y_n\in Y$ satisfy
$\pi(y_n)=x_n$.   Then $y_n\in Y_0$, so by compactness of $Y_0$, we may
find a convergent subsequence $(y_{n_k})$, say $y_{n_k}\rightarrow
y_0$.  Clearly $y_0\in Y_0$ and continuity of $\pi$ gives
$\pi(y_0)=\lim \pi(y_{n_k})=\lim x_{n_k}=x$.  This shows $\pi(Y_0)=X$,
contradicting minimality of $Y$.   So $Y$ has no isolated points,
whence $Y$ is perfect.

Brouwer's theorem shows that any non-empty, perfect, totally disconnected metric
space is homeomorphic to the Cantor set.  Thus $(Y, \pi|_{Y})$ is an
essential cover of $X$ by a set homeomorphic to the Cantor set.  Proposition~\ref{YXiso} now
shows the Cantor set and $X$ are Boolean equivalent.
\end{proof}

The following result falls into the same class of results as Kuratowski's classification
of standard Borel spaces~\cite[Theorem~3.3.13]{SrivastavaACBoSe} and Maharam's
decomposition of complete measure spaces into atomic and non-atomic
parts~\cite{MaharamOnHoMeAl}.

\begin{theorem}\label{compactmetric}   Suppose $X$ and $Y$ are  compact
  metric spaces.  Then $X$ and $Y$ are Boolean equivalent if and only
  if $\isol(X)$ has the same cardinality as $\isol(Y)$ and either
  $X_c=Y_c=\emptyset$ or both $X_c$ and $Y_c$ are non-empty.
\end{theorem}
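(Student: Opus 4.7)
The strategy is to exploit the product decomposition $\ropen(X)\cong\ropen(X_a)\times\ropen(X_c)$ of Corollary~\ref{Cor:D}\eqref{Cor:D2}.

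For necessity, I would first observe that the atoms of $\ropen(X)$ are precisely the singletons $\{x\}$ with $x\in\isol(X)$: such a singleton is clopen hence in $\ropen(X)$, while Hausdorffness together with Fact~\ref{ropfacts}(a) produces a proper non-empty regular open subset inside any $V\in\ropen(X)$ containing two distinct points. A Boolean-algebra isomorphism maps atoms bijectively to atoms, giving $|\isol(X)|=|\isol(Y)|$. For the second condition, I would verify that $\ropen(X)$ is atomic exactly when $X_c=\emptyset$: if $X_c=\emptyset$, density of $\isol(X)$ in $X=X_a$ places an isolated-point singleton below every non-zero regular open set; if $X_c\neq\emptyset$, then $\intr(X_c)$ is a non-zero regular open set and, by Corollary~\ref{reclBA}, the regular open sets below it form a Boolean algebra isomorphic to $\ropen(X_c)$, which is atomless by Corollary~\ref{Cor:D}\eqref{Cor:D1}, so $\intr(X_c)$ has no atom beneath it. Atomicity is preserved by Boolean-algebra isomorphism, forcing $X_c=\emptyset\iff Y_c=\emptyset$.

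For sufficiency, the decomposition reduces the task to exhibiting isomorphisms $\ropen(X_a)\cong\ropen(Y_a)$ and, whenever both continuous parts are non-empty, $\ropen(X_c)\cong\ropen(Y_c)$. The atomic piece is handled by the canonical map
\[
\ropen(X_a)\ni V\longmapsto V\cap\isol(X)\in\mathbf{2}^{\isol(X)},
\]
whose inverse is $A\mapsto\intr(\cl(A))$ (well-defined because $A\subseteq\isol(X)$ is already open). Density of $\isol(X)$ in $X_a$ combined with Fact~\ref{ropfacts}(a) forces two distinct regular open sets of $X_a$ to differ at some isolated point, and the map preserves $\wedge$, $\vee$, and $\neg$ thanks to the observation that an isolated point in $\cl(V)$ must already lie in $V$. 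Hence $\ropen(X_a)\cong\mathbf{2}^{\isol(X)}\cong\mathbf{2}^{\isol(Y)}\cong\ropen(Y_a)$. For the continuous piece, Corollary~\ref{Cor:D}\eqref{Cor:D4} makes $X_c$ and $Y_c$ compact, perfect, and metrizable, so Proposition~\ref{CantorCmptPMS} yields $\ropen(X_c)\cong\ropen(C)\cong\ropen(Y_c)$ whenever both are non-empty. Reassembling via Corollary~\ref{Cor:D}\eqref{Cor:D2} produces $\ropen(X)\cong\ropen(Y)$.

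The main point requiring care is verifying that $V\mapsto V\cap\isol(X)$ is a full Boolean-algebra isomorphism rather than merely an injection. Once one observes that an isolated point lies in $\cl(V)$ exactly when it lies in $V$, both surjectivity (via $\intr(\cl(A))$) and preservation of all three operations follow in short order. Everything else amounts to assembly from Corollary~\ref{Cor:D} and Proposition~\ref{CantorCmptPMS}.
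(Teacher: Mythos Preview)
Your argument is correct, but it diverges from the paper's in two places.  For necessity, the paper invokes Proposition~\ref{BE} to pass to the projective covers $P_X$ and $P_Y$, then reads off $|\isol(X)|=|\isol(Y)|$ from Corollary~\ref{Cor:D}\eqref{Cor:D3} and deduces $X_c\equiv_B Y_c$ from the chain $\ropen(X_c)\simeq\ropen((P_X)_c)\simeq\ropen((P_Y)_c)\simeq\ropen(Y_c)$.  Your route is more direct: you identify atoms of $\ropen(X)$ with isolated points of $X$ and characterize $X_c=\emptyset$ by atomicity of $\ropen(X)$, avoiding any mention of projective covers.  For the sufficiency step $\ropen(X_a)\cong\ropen(Y_a)$, the paper appeals to Proposition~\ref{cmtify} (both $X_a$ and $Y_a$ are compactifications of the same discrete space up to bijection, hence Boolean equivalent to its one-point compactification), whereas you build an explicit isomorphism $\ropen(X_a)\cong\mathbf{2}^{\isol(X)}$.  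Your approach is more self-contained and elementary; the paper's approach keeps everything within the essential-cover framework developed earlier, and in particular exercises Proposition~\ref{cmtify}.  The treatment of $X_c$ via Proposition~\ref{CantorCmptPMS} and the final assembly via Corollary~\ref{Cor:D}\eqref{Cor:D2} are the same in both.
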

\begin{proof} 
Suppose $X$ and $Y$ are Boolean equivalent.   
Proposition~\ref{BE} shows $P_X$ and $P_Y$ are homeomorphic.  Therefore, 
$(P_X)_c$ is homeomorphic to $(P_Y)_c$, and, by
Corollary~\ref{Cor:D}\eqref{Cor:D3},
 the cardinalities of 
 $\isol(X)$ and $\isol(Y)$ are the same.  Then
 \[\ropen(X_c)\stackrel{\eqref{reclBA}}{\simeq} \ropen((P_X)_c)\simeq
   \ropen((P_Y)_c) \stackrel{\eqref{reclBA}}{\simeq} \ropen(X_c).\]
 Thus $X_c\equiv_B X_c$, so $X_c$ and $Y_c$ are either both empty or
 both non-empty.  

For the converse, suppose $\isol(X)$ has the same cardinality as
$\isol(Y)$ and  $X_c$ and $Y_c$ are either both empty or both non-empty.
Proposition~\ref{cmtify} gives $X_a\equiv_B Y_a$.  By
Corollary~\ref{Cor:D}\eqref{Cor:D4}, $X_c$ and $Y_c$ are perfect
metric spaces,
so Proposition~\ref{CantorCmptPMS} gives $X_c\equiv_B Y_c$.
Therefore $\ropen(X_a)\times \ropen(X_c)$ is isomorphic to
$\ropen(Y_a)\times \ropen(Y_c)$.   Corollary~\ref{Cor:D}\eqref{Cor:D2}
then gives $X\equiv_B Y$. 
\end{proof}

\def\cprime{$'$}
\providecommand{\bysame}{\leavevmode\hbox to3em{\hrulefill}\thinspace}
\providecommand{\MR}{\relax\ifhmode\unskip\space\fi MR }
% \MRhref is called by the amsart/book/proc definition of \MR.
\providecommand{\MRhref}[2]{%
  \href{http://www.ams.org/mathscinet-getitem?mr=#1}{#2}
}
\providecommand{\href}[2]{#2}

\end{document}